\theoremstyle{definition}
\newtheorem{thm}{Theorem}   %[section] option produces Theorem 2.1, etc.
\newtheorem{cor}[thm]{Corollary}
\newtheorem{lem}[thm]{Lemma}
\newtheorem{prop}[thm]{Proposition}
\newtheorem{dfn}[thm]{Definition}
\newtheorem{aside}[thm]{Remark}
\newtheorem{oques}[thm]{Open Questions}
\newcommand{\spz}{\hspace*{.4cm}}
\newcommand{\spa}{\hspace*{1cm}}
\newcommand{\spb}{\hspace*{2cm}}
\newcommand{\spc}{\hspace*{3cm}}
\newcommand{\spd}{\hspace*{4cm}}
\newcommand{\vsp}{\vspace*{1cm}}
\newcommand{\pp}{\mathscr{P}}
\newcommand{\zz}{\mathbb{Z}}
\newcommand{\qq}{\mathbb{Q}}
\newcommand{\cc}{\mathbb{C}}
\newcommand{\bp}{\mathbb{BP}}
\newcommand{\sdot}{\!\cdot\!}
\newcommand{\lcm}{\text{lcm}}
\newcommand{\bin}{binomid}  %formerly ``binomioid''
\newcommand{\bim}{binomid}  % formerly ``Pascaloid''  or  ``binomioid''
\newcommand{\Bim}{Binomid}  % formerly ``Pascaloid''  or  ``binomioid''
\newcommand{\bino}{nomid}  % formerly ``nomial''
\newcommand{\lu}{U}      % notation for Lucas sequence starting (0,1).
\newcommand{\gc}{GCD}
\newcommand{\ds}{\displaystyle}
\newcommand{\sz}{\scriptsize}
\newcommand{\bep}{\boldsymbol{!}}      % bold factorial
\newcommand{\la}{\langle}
\newcommand{\ra}{\rangle\bep}           % includes bold factorial
\newcommand{\abin}[2]{ \ensuremath{ \left[\! \begin{array}{c} #1 \\ #2 \end{array}\! \right]  } }
\newcommand{\fbin}[2]{ \ensuremath{ \left[ \begin{smallmatrix} #1 \\ \rule{0mm}{3mm}#2 \end{smallmatrix} \right] }}
\newcommand{\dgcd}{ComboSum}
\title{\bf \large Divisibility Properties for Integer Sequences} 
\author{\small  Daniel B. Shapiro\\[-5pt] \small   Department of Mathematics \\[-5pt] \small Ohio State University\\[-5pt] \small  Columbus, OH 43210}
\date{}
\begin{document}
%\date{} 

\begin{center}
{\bf \large Divisibility Properties for Integer Sequences}

\vsp
\textbf{ABSTRACT}
\end{center}
\vspace*{-3mm}
A sequence of nonzero integers $f = (f_1, f_2, \dots)$ is \emph{\bin} if  every $f$-\bino\ coefficient $\fbin{n}{k}_f$ is an integer.  Those terms are the generalized binomial coefficients:

\spd $\displaystyle \abin{n}{k}_{f}  \;=\; \frac{f_nf_{n-1}\cdots f_{n-k+1}}{\!\!\!\!\!f_k\,f_{k-1}\phantom{i}\cdots\phantom{i} f_1}. $

Let $\Delta(f)$ be the infinite triangle with those numbers as entries. When $I = (1, 2, 3, \dots)$, $\Delta(I)$ is Pascal's Triangle and we see that  $I$ is \bin.

Surprisingly, every row and column of Pascal's Triangle is \bin.  If fact, for any  $f$, each row and column of $\Delta(f)$ generates its own triangle. Those triangles fit together to form the ``\Bim\ Pyramid'' $\bp(f)$.  Sequence $f$ is \emph{\bin\ at every level} if all entries of $\bp(f)$ are integers.

We prove that several familiar sequences are \bin\ at every level.  For instance, every sequence $L$ satisfying a linear recurrence of order 2 has that property provided $L(0) = 0$.  The sequences $I$, the Fibonacci numbers, and $(2^n - 1)_{n \ge 1}$ provide examples.

\newpage

\maketitle

\section{ Introduction.}

In this paper, we consider sequences $f = (f_n)_{n \ge 1} = (f_1, f_2, \dots)$ where every $f_n$ is a nonzero integer.  We sometimes write $f(n)$ in place of $f_n$.

\begin{dfn} \label{fbin_coeff}  
For integers $n, k$ with $0 \le k \le n$, the  \emph{$f$-\bino}

coefficient is: \\[-3pt]

\spd\spa $\displaystyle \abin{n}{k}_{f}  \;=\; \frac{f_nf_{n-1}\cdots f_{n-k+1}}{\!\!\!\!\!\!f_k\,f_{k-1}\phantom{i}\cdots\phantom{i} f_1}$. 

Defining \emph{$f$-factorials} as: \ 
$\la n \ra _f =  f_nf_{n-1}\cdots f_2f_1$ we find that
\smallskip

\spd\spa   $\abin{n}{k}_{f} \;=\; \dfrac{ \la n \ra_{f} }{ \la k \ra_f\!\cdot\! \la n-k \ra_f }$
\end{dfn}

Set $\la 0 \ra = 1$, and note that $\fbin{n}{0}_f\!  = \fbin{n}{n}_f\! =  1$.  Also, $\fbin{n}{k}_f$ is undefined when $k > n$ and when $n$ or  $k$ is negative. (Some authors set  $\fbin{n}{k}_f = 0$ when $k > n$.)
That factorial formula helps explain the symmetry:%\\[-5pt]

\spd\spa    $\abin{n}{k}_f\! \;=\;  \abin{n}{n-k}_f\!$, \ whenever $0 \le k \le n$.   \\

These generalized binomial coefficients have been considered by several authors in various contexts.  Ward \cite{mW36} mentioned them in 1936 and wrote several subsequent papers discussing their properties and applications. For instance, in \cite{mW36a} he developed a theory of calculus that includes analogues of power series.   Gould  \cite{hG69} describes the early history of $f$-\bino\ coefficients (that he calls Fonten\'{e}-Ward coefficients), and mentions some of their properties. Knuth and Wilf \cite{KW89} provide a few more early references, and Ballot \cite{cB14} investigates related ideas.  

\vfill

\pagebreak

\begin{dfn} \label{fbin_triangle}  \label{bin_def}
The \textit{\bim\ triangle} $\Delta(f)$ is the triangular array of all the $f$-\bino\ coefficients $\fbin{n}{k}_f$ for $0 \le k \le n$.  
Sequence  $f$ is \emph{\bin} if every entry of $\Delta(f)$ is an integer.  \ Then $f$ is \bin\ if and only if: 

\spb  \fbox{\strut\ $f_1f_2 \cdots f_k$ \;divides\; $f_{m+1}f_{m+2} \cdots f_{m+k}$, } \  for every $m$ and $k$ in $\zz^+$
\end{dfn}

\vspace*{-5pt}
If  $f_1 = 1$ then $f$ appears as Column 1 of its triangle $\Delta(f)$, since $\fbin{n}{1}_f = f_n$.  \\
The classic ``Pascal's Triangle'' of binomial coefficients is $\Delta(I)$ where  $I = (n) = (1, 2, 3, 4, \dots)$. Some %of the long and interesting 
history of this numerical triangle is mentioned in the OEIS \cite{OEIS} A007318 and in  \url{https://en.wikipedia.org/wiki/Pascals_triangle}.

\medskip

It is convenient to allow finite sequences.  Suppose there is  $N \ge 1$ such that 

\spb $f_n \ne 0$ for $1 \le n \le N$ \ and \ $f_n$ is not defined for $n > N$.

When $0 \le k \le n \le N$, define $\fbin{n}{k}_f $ as before, leaving $\fbin{n}{k}_f $ undefined when $n > N$. Extend Definition \ref{bin_def} similarly: $\Delta(f)$ is a finite triangular array with $N+1$ entries along each edge.

\medskip

To verify notations, we display  the first few values of Pascal's Triangle $\Delta(I)$ below, with index $n \ge 0$ vertically on the left, and index  $k \ge 0$ across the top.

\spc {\small 
\begin{tabular}{c|cccccccccc}
       & 0   & 1  & 2   &  3    &   4     &    5    &    6   &   7  &   8   \\    
\hline
   0  & 1 \\ 
   1  & 1  &  1 \\
   2  & 1  &  2  & 1 \\
   3  & 1  &  3  &  3   & 1  \\
   4  &  1 &  4  &  6   &   4   &   1 \\
   5  &  1 &  5  &  10 &  10  &    5    &   1  \\
   6  &  1 &  6  & 15  &  20  &   15   &    6  &    1  \\
   7  &  1 &  7  &  21 &  35  &   35   &  21  &    7   &   1   \\
   8  &  1 &  8  &  28 &  56  &   70   &  56  &   28  &   8  &  1 \\
   \vdots &   \vdots &  \vdots &  \vdots &  \vdots &  \vdots &  \vdots &   \vdots &   \vdots &    \vdots &   $ \ddots $ 
 
\end{tabular}
}\\[3pt]
\spd  {\footnotesize Classic Pascal triangle $\Delta(I)$ for $I(n) = n$.  }

There are several ways to prove that every $\binom{n}{k}$ is an integer.  For instance, $\binom{n}{k}$ is the number of elements in some set, so it is a non-negative integer. An inductive proof uses the formula $\binom{n+1}{k+1} = \binom{n}{k+1} + \binom{n}{k}$.   A third proof arises from prime factorizations.

The following Lemma can be used to generate examples of \bin\ sequences.

\begin{lem} \label{bin-basic} Suppose $f, g$ are integer sequences.  
\begin{enumerate}[label=(\arabic*),topsep=0pt]

\item \label{prods} Define sequence  $fg$ by: $(fg)_n= f_ng_n$.  
Then                $\abin{n}{k}_{fg} = \abin{n}{k}_f \!\!\!\cdot \abin{n}{k}_g$.

If $f$ and $g$ are \bin, then $fg$ is \bin.  \\
If $c$ is nonzero in $\zz$  then: \ $cf$ is \bin\ \ if and only if \ $f$ is \bin.

\item \label{div_chain} Suppose $f$ is a \textit{divisor-chain}: $f_n \mid f_{n+1}$ for every $n$. Equivalently: $f_n = \la n \ra_a$ for some integer sequence $a$. \\ 
\spa If $f$ is a divisor-chain then $f$ is \bin.  \\
In particular, $(c^n) = (c, c^2, c^3, \dots)$ and  $n! = (1, 2, 6, 24, 120, \dots)$ are \bin.

\item \label{homom}
Define an integer sequence $\psi$ to be \textit{homomorphic},\footnote{Other names include ``totally multiplicative'' and ``strongly multiplicative.''\medskip } if   \ $\psi(mn) = \psi(m)\psi(n)$ \ for every $m, n$.   If $f$ is \bin\ then $\psi \circ f$ is also \bin.

\item \label{chain} If $f = (f_1, f_2, f_3, \dots)$ is \bin\, then so are the sequences $(1, f_1, f_2, f_3, \dots)$ and $(1, f_1, 1, f_2, 1, f_3, \dots )$ and $(f_1, f_1, f_2, f_2, f_3, f_3, \dots )$.

\end{enumerate}
\end{lem}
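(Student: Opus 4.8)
The plan is to treat the four parts in increasing order of difficulty, since (1) and (3) are immediate multiplicative identities, (2) is a one-line divisibility argument, and (4) carries the real work.

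For part (1), I would expand $\abin{n}{k}_{fg}$ directly: because $(fg)_j = f_jg_j$, the numerator $f_ng_n\cdots f_{n-k+1}g_{n-k+1}$ and the denominator $f_kg_k\cdots f_1g_1$ each split into an $f$-part and a $g$-part, giving $\abin{n}{k}_{fg} = \abin{n}{k}_f\cdot\abin{n}{k}_g$ at once, and hence that $fg$ is \bin\ when $f,g$ are. For the scalar claim I note that the constant sequence $\gamma=(c,c,c,\dots)$ (with $c\ne 0$) has $\abin{n}{k}_\gamma = c^k/c^k = 1$, since there are $k$ factors $c$ on top and $k$ on the bottom; applying the product rule to $cf=\gamma f$ gives $\abin{n}{k}_{cf}=\abin{n}{k}_f$, so $cf$ and $f$ have identical triangles and are \bin\ together. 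Part (3) is the same idea with a homomorphism: once $f$ is \bin\ we may write $f_n\cdots f_{n-k+1}=(f_k\cdots f_1)\,\abin{n}{k}_f$ as an identity in $\zz$, and applying $\psi$ (which passes through products) yields $\abin{n}{k}_{\psi\circ f}=\psi\!\big(\abin{n}{k}_f\big)$, manifestly an integer, so $\psi\circ f$ is \bin.

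For part (2), I would first record the stated equivalence: if $f_n=\la n\ra_a$ then $f_{n+1}/f_n=a_{n+1}\in\zz$, so $f_n\mid f_{n+1}$; conversely a divisor-chain produces integers $a_1=f_1,\ a_{n+1}=f_{n+1}/f_n$ with $\la n\ra_a=f_n$ by telescoping. Then, in a divisor-chain, $f_i\mid f_{n-k+i}$ for each $1\le i\le k$ (divisibility chains up from index $i$ to $n-k+i\ge i$), so $\abin{n}{k}_f=\prod_{i=1}^{k}\frac{f_{n-k+i}}{f_i}$ is a product of integers and $f$ is \bin. The sequences $(c^n)$ and $n!$ are divisor-chains, so they are \bin.

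Part (4) is where the work lies, and I would run everything through the boxed divisibility criterion. The reusable fact I would isolate first is: \emph{if $f$ is \bin\ then $\la s\ra_f=f_1\cdots f_s$ divides every product $f_af_{a+1}\cdots f_b$ of at least $s$ consecutive terms with $a\ge 1$}, because $f_1\cdots f_s\mid f_a\cdots f_{a+s-1}$ by the criterion (offset $a-1\ge 0$) and that subproduct divides the longer one. For the prepend-$1$ sequence, the leading window $[1,k]$ loses exactly one factor to the inserted $1$ (leaving $f_1\cdots f_{k-1}$) while every later window is a full block of $k$ consecutive $f$-terms; for the interleave-$1$s sequence the leading window contains $\lfloor k/2\rfloor$ of the $f$-factors and every length-$k$ window contains at least that many. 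In both cases the two relevant products collapse to blocks of consecutive $f$-terms and the reusable fact finishes it.

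The doubled sequence $C=(f_1,f_1,f_2,f_2,\dots)$ is the main obstacle, since it is not obtained by inserting $1$s and its windows no longer reduce to a single consecutive block. Here I would instead compute $\la n\ra_C=\la\lfloor n/2\rfloor\ra_f\cdot\la\lceil n/2\rceil\ra_f$ and substitute into $\abin{n}{k}_C=\la n\ra_C/(\la k\ra_C\,\la n-k\ra_C)$. A short case analysis on the parities of $n$ and $k$ then rewrites $\abin{n}{k}_C$ as a product of two ordinary $f$-binomials: writing $n=2p$ or $2p+1$ and $k=2q$ or $2q+1$, one gets $\abin{2p}{2q}_C=\abin{p}{q}_f^{2}$, and the mixed cases give $\abin{p}{q}_f\abin{p}{q+1}_f$, $\abin{p}{q}_f\abin{p+1}{q}_f$, and $\abin{p}{q}_f\abin{p+1}{q+1}_f$ respectively, each a product of integers. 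I expect this parity bookkeeping to be the most error-prone step, but once the four identities are checked the integrality is immediate.
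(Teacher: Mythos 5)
Your proposal is correct in all four parts. There is nothing in the paper to compare it against: the paper states this Lemma and then says ``Proofs are left to the reader,'' so your argument supplies exactly what the paper omits. The individual steps all check out: the constant-sequence observation $\abin{n}{k}_{(c,c,c,\dots)} = 1$ cleanly handles the scalar claim in (1); the pairing $\abin{n}{k}_f = \prod_{i=1}^{k} f_{n-k+i}/f_i$ settles (2); applying $\psi$ to the integer identity $f_n\cdots f_{n-k+1} = (f_k\cdots f_1)\fbin{n}{k}_f$ gives $\fbin{n}{k}_{\psi\circ f} = \psi\big(\fbin{n}{k}_f\big)$ for (3); and in (4) your reusable fact (that $\la s \ra_f$ divides any product of at least $s$ consecutive terms $f_a\cdots f_b$ with $a \ge 1$) correctly disposes of the two insertion sequences, since the $f$-factors inside any window of the new sequence form a consecutive block of length at least that of the leading window. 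I also verified your four parity identities for the doubled sequence: with $\la n \ra_C = \la \lfloor n/2\rfloor \ra_f \sdot \la \lceil n/2\rceil \ra_f$ one gets $\fbin{2p}{2q}_C = \fbin{p}{q}_f^2$, $\fbin{2p}{2q+1}_C = \fbin{p}{q}_f\fbin{p}{q+1}_f$, $\fbin{2p+1}{2q}_C = \fbin{p}{q}_f\fbin{p+1}{q}_f$, and $\fbin{2p+1}{2q+1}_C = \fbin{p}{q}_f\fbin{p+1}{q+1}_f$, and in each case the constraint $k \le n$ forces the indices of both factors to be legal, so every entry of $\Delta(C)$ is a product of integers.
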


Proofs are left to the reader.   \qed   

\bigskip

The ``triangular numbers''  $T(n) = \frac{n(n+1)}{2}$ are the entries in Column 2 of Pascal's triangle. 
 Here are the first few rows of $\Delta(T)$.  

\spc {\small 
\begin{tabular}{c|ccccccccc}
       & 0   & 1  & 2   &  3    &   4     &    5    &    6   &   7    \\    
\hline
   0  & 1 \\ 
   1  & 1  &  1 \\
   2  & 1  &  3  & 1 \\
   3  & 1  &  6  &  6   & 1  \\
   4  &  1 &  10  &  20   &   10   &   1 \\
   5  &  1 &  15  &  50   &  50  &    15    &   1  \\
   6  &  1 &  21  & 105     &  175  &   105   &    21  &    1  \\
   7  &  1 &  28  &  196  &  490  &   490   &  196  &    28   &   1   \\
   \vdots &   \vdots &  \vdots &  \vdots &  \vdots &  \vdots &  \vdots &   \vdots & \vdots &  $ \ddots $ 
\end{tabular}
}\\[3pt]
\spd  {\footnotesize Triangle for $T(n) = C_2(n) = n(n+1)/2$.  }\\[-6pt]

Similarly, the third Pascal Column $C_3 = (1, 4, 10, 20, 35, \dots )$ generates the \bim\ triangle:

\spc {\small 
\begin{tabular}{c|lllllllll}
       & 0   & 1  & 2   &  3    &   4     &    5    &    6   &   7    \\    
\hline
   0  &  1 \\ 
   1  &  1  &  1 \\
   2  &  1  &  4    & 1 \\
   3  &  1  &  10  &  10    & 1  \\
   4  &  1  &  20  &  50    &    20    &   1 \\
   5  &  1  &  35  &  175   &   175   &    35    &   1  \\
   6  &  1  &  56  &  490   &   980   &   980    &    56    &    1  \\
   7  &  1  &  84  &  1176  &  4116  &  4116   &  1176  &    84   &   1   \\
   \vdots  &  \vdots &  \vdots &  \vdots &  \vdots &  \vdots &  \vdots &   \vdots & \vdots &  $ \ddots $ 
 
\end{tabular}
}\\[3pt]
\spd  {\footnotesize Triangle for $C_3(n) = n(n+1)(n+2)/6$.  }

Those integer values lead us to suspect that every Pascal column $C_m$ is \bin.\

 That result has been proved by several authors.  It also follows from our Theorem \ref{DP_Bin} and Lemma \ref{DP_basics}(3) below.

The OEIS  \cite{OEIS} webpage for sequence A342889 provides many references related to the triangles $\Delta(C_m)$. For instance, $\Delta(T) = \Delta(C_2)$ is the triangle of Narayana numbers. %, also called the Catalan triangle. 
The ``generalized binomial coefficients''\footnote{Some authors use that term for different numerical triangles. For instance, the numbers $\binom{n}{m}_s$ in \cite{bB93} p. 15 are not the same as our \bin\ coefficients.  Similarly the ``Pascal pyramid'' built from trinomial coefficients (as in  \cite{bB93} p. 45) is not one of the \Bim\ Pyramids mentioned below.}   
$(n,k)_m$ mentioned on the OEIS pages are our $\fbin{n}{k}_{ C_{m+1} }$ built from Pascal columns.  

Certain determinants of binomial coefficients are related to sequence $C_m$:  \smallskip

%\spb $\displaystyle   \det \left[ \binom{n+i}{m+j}  \right]_{ i,j =0}^{k-1} \ = \   \frac{  \binom{n+k-1}{m}\cdots \binom{n}{m} }{ \binom{m+k-1}{m} \cdots \binom{m}{m}  }  \ = \ \abin{n-m+k}{k}_{C_m}$.

\spb $  \det \Big[ \binom{ \rule[-1.5mm]{0mm}{3mm}  n+i}{m+j}  \Big]_{ i,j =0}^{k-1} \ = \   \frac{\rule[-2mm]{0mm}{3mm}  \binom{n+k-1}{m} \ \cdots  \ \binom{n}{m} }{\rule[1mm]{0mm}{3mm}  \binom{m+k-1}{m} \ \cdots \ \binom{m}{m}  }  \ = \ \fbin{n-m+k}{k}_{C_m}$.

\smallskip
Consequently, every $C_m$ is a \bin\ sequence.  That determinant formula is mentioned in \cite{bB93} p. 164, referring to \cite{eN01} p. 257.\footnote{Netto states that this determinant formula is due to an 1865 work of v.~Zeipel.}

Gessel and Viennot  \cite{GV85} found combinatorial interpretations for such Pascal determinants.  In a recent exposition, Cigler \cite{jC21} derives determinant expressions for entries of  $\Delta(C_m)$, arrays that he calls ``Hoggatt Triangles'' following \cite{FA88}.

\bigskip \medskip

What about the Pascal rows?  
 Here are some triangles for rows $R_m(n) = \binom{m}{n-1}$.  

{\small 
\begin{tabular}{c|ccccc}
       & 0   & 1  & 2   &  3       \\    
\hline
   0  & 1 \\ 
   1  & 1  &  1 \\
   2  & 1  &  2  & 1 \\
   3  & 1  &  1  &  1   & 1  \\
\end{tabular}
}  \spd
{\small 
\begin{tabular}{c|ccccccc}
       & 0   & 1  & 2   &  3    &  4   \\    
\hline
   0  & 1 \\ 
   1  & 1  &  1 \\
   2  & 1  &  3  & 1 \\
   3  & 1  &  3  &  3   & 1  \\
   4  & 1  &  1  &  1   & 1  &  1  \\
\end{tabular}
}\\[3pt]
\spz  {\footnotesize Triangle for $R_2(n) = \binom{2}{n-1}$.  }  \spc  {\footnotesize Triangle for $R_3(n) = \binom{3}{n-1}$.  }\\

\vspace*{4mm}

{\small 
\begin{tabular}{c|cccccccccccc}
       & 0   & 1  & 2   &  3 &  4  &  5    \\    
\hline
   0  & 1 \\ 
   1  & 1  &  1 \\
   2  & 1  &  4  & 1 \\
   3  & 1  &  6  &  6   & 1  \\
   4  & 1  &  4  &  6   & 4  &  1  \\
   5  & 1  &  1  &  1   & 1  &  1  &  1 \\
\end{tabular}
}  \spc % \vspace*{6mm}
{\small 
\begin{tabular}{c|cccccccccccc}
       & 0   & 1  & 2   &  3    &  4  &  5  &  6  \\    
\hline
   0  & 1 \\ 
   1  & 1  &  1 \\
   2  & 1  &  5  & 1 \\
   3  & 1  &  10  &  10   & 1  \\
   4  & 1  &  10  &  20   & 10 &  1 \\
   5  & 1  &  5  &  10   & 10  &  5  &  1  \\
   6  & 1  &  1  &  1   & 1  &  1  &  1  &  1 \\
\end{tabular}
}\\[3pt]
\spz  {\footnotesize Triangle for $R_4(n) = \binom{4}{n-1}$.  }  \spc  {\footnotesize Triangle for $R_5(n) = \binom{5}{n-1}$.  }

\vfill
\pagebreak

{\small 
\begin{tabular}{c|cccccccccccc}
       & 0   & 1  & 2   &  3 &  4  &  5  & 6  &  7    \\    
\hline
   0  & 1 \\ 
   1  & 1  &  1 \\
   2  & 1  &  6  & 1 \\
   3  & 1  &  15  &  15   & 1  \\
   4  & 1  &  20  &  50   & 20  &  1  \\
   5  & 1  &  15  &  50   & 50  &  15  &  1 \\
   6  &  1 &   6   &  15  &  20  &  15  &   6  &   1 \\
   7  & 1  &   1   &   1   &   1  &  1  &  1  &  1  &  1
\end{tabular}
}  \spa  \vspace*{1mm}
{\small 
\begin{tabular}{c|cccccccccccccc}
       & 0    & 1  & 2  &  3    &  4    &   5   &  6   &   7   &  8  \\    
\hline
   0  & 1 \\ 
   1  & 1  &   1 \\
   2  & 1  &   7    &    1 \\
   3  & 1  &  21   &   21   & 1  \\
   4  & 1  &  35   &  105   &  35   &    1 \\
   5  & 1  &  35   &  175   & 175  &  35  &   1  \\
   6  & 1  &  21   &  105   & 175  & 105 &  21  &  1 \\
   7  & 1  &   7    &    21   &  35  &  35  &  21  &  7  &  1  \\
   8  & 1  &   1    &     1    &   1   &   1   &   1  &  1  &  1  &  1
\end{tabular}
}\\[3pt]
\spz  {\footnotesize Triangle for $R_6(n) = \binom{6}{n-1}$.  }  \spc\quad  {\footnotesize Triangle for $R_7(n) = \binom{7}{n-1}$.  }

\bigskip

Define the \textbf{Binomial Pyramid} to be the numerical pyramid built by stacking the triangles built from Pascal Rows.  That infinite pyramid has three faces with all outer entries equal to 1.  The horizontal slice at depth $m$ is $\Delta(R_m)$, the triangle built from Pascal Row $m$. Our numerical examples indicate that all entries are integers and the pyramid has three-fold rotational symmetry.  

Removing one face (of all ones) from that pyramid exposes an infinite triangular face that is the original Pascal triangle.  This is seen in the triangles $\Delta(R_m)$ displayed above: each Column 1 is a row of Pascal's triangle (by construction).

Listing Column 2 for those triangles displayed above yields\\
\spb  (1, 1), \, (1, 3, 1), \; (1, 6, 6, 1), \; (1, 10, 20, 10, 1), \; etc.  

Those are exactly the rows of the \bim\ triangle for $T = C_2 = (1, 3, 6, 10, \dots)$ \linebreak 
 displayed earlier!  That is, removing two face-layers of the \Bim\ Pyramid exposes a triangular face that equals $\Delta(C_2)$, built from Pascal Column 2.  This pattern continues:  Each triangle $\Delta(C_m)$ appears as a slice of that \Bim\ Pyramid.  We generalize those assertions here, and outline proofs in the next section.

\begin{dfn} \label{pyramid} For a sequence $f$, the \textbf{\Bim\ Pyramid} 
$\bp(f)$  is made by stacking the \bin\ triangles constructed from the rows of triangle $\Delta(f)$.

Sequence $f$ is\textit{ \bin\ at level $c$} if column $c$ of $\Delta(f)$ is a \bin\ sequence. 
\end{dfn}

By definition, $f$ is \textit{\bin\ at every level} if each column of $\Delta(f)$ is \bin.  
By Corollary \ref{ColRowBin} below, this is equivalent to saying: Pyramid $\bp(f)$ has all integers entries.

\medskip
We mentioned that $T = (1, 3, 6, 10, 15, \dots)$ is \bin\ at level 1.  Note that $T$ is not \bin\ at level 2 since Column 2 of $\Delta(T)$  is $g = (1, 6, 20, 50, 105, \dots)$ and $\fbin{4}{2}_g$ is not an integer.  

Some sequences are \bin\ at level 2 but not at level 1.  For instance, let $f$ be the eventually constant sequence $f = (2^{a_n})$ where: $a = (0, 2, 4, 1, 3, 1, 4, 4, 4, \dots )$. \linebreak
It is not \bin\ because $\fbin{6}{3}_f = \frac{1}{2}$.  Check that Column 2 of $\Delta(f)$ is $(2^{b_n})$ where 
$b = (0, 4, 3, 2, 2, 3, 6, 6, 6, \dots)$. By checking several cases, we can show that this column is \bin. 

\bigskip
Here are some examples that are fairly easy to verify.

\begin{prop} \label{every_level_1} Continue with notations in Lemma \ref{bin-basic}:
\begin{enumerate}[label= (\arabic*),topsep=0pt,itemsep=0pt,partopsep=4pt, parsep=0pt,before=\setlength{\baselineskip}{0mm},itemsep=3pt] \label{DivChain_Bin} 

\item  Every divisor-chain is \bin\ at every level.

\item Suppose the integer sequence $\psi$ is homomorphic. If $f$ is \bin\ at level $c$, then so is $\psi \circ f$.

\end{enumerate}
\end{prop}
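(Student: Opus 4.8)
The plan is to treat both parts by pinning down exactly which sequence occupies column $c$ of the relevant triangle and then reducing to the corresponding ``level $0$'' statement already recorded in Lemma \ref{bin-basic}. Write the column-$c$ sequence of $\Delta(f)$ as $g=(g_j)_{j\ge 1}$ with $g_j=\fbin{c+j-1}{c}_f$, so that $g_1=\fbin{c}{c}_f=1$. The one computation that drives everything is the telescoping ratio of two vertically adjacent entries: for $n\ge c$,
\[
\frac{\fbin{n+1}{c}_f}{\fbin{n}{c}_f}
\;=\; \frac{f_{n+1}f_n\cdots f_{n-c+2}}{f_n f_{n-1}\cdots f_{n-c+1}}
\;=\; \frac{f_{n+1}}{f_{n-c+1}},
\]
since all but one factor in the numerator and denominator cancel. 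Putting $n=c+j-1$ gives the clean formula $g_{j+1}/g_j = f_{c+j}/f_j$.

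For part (1), suppose $f$ is a divisor-chain, so $f_j\mid f_{j+1}$ for every $j$; then $f_j\mid f_{j+1}\mid\cdots\mid f_{c+j}$, whence $f_j\mid f_{c+j}$ and the ratio $g_{j+1}/g_j=f_{c+j}/f_j$ is an integer. Thus $g_j\mid g_{j+1}$ for every $j$, i.e. the column sequence $g$ is itself a divisor-chain. By Lemma \ref{bin-basic}(2) every divisor-chain is \bin, so column $c$ is \bin. Since $c$ was arbitrary, $f$ is \bin\ at every level.

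For part (2), put $F=\psi\circ f$ (a sequence of nonzero integers, as in Lemma \ref{bin-basic}(3)), and let $G=(G_j)$ be column $c$ of $\Delta(F)$. The key claim is that passing to column $c$ commutes with applying $\psi$, namely $G_j=\psi(g_j)$ for all $j$. To prove it, clear denominators in the definition of $g_j$ to obtain the integer identity $g_j\cdot(f_c\cdots f_1)=f_{c+j-1}\cdots f_j$; applying $\psi$ and using $\psi(mn)=\psi(m)\psi(n)$ repeatedly turns each product into the product of the $\psi$-values, giving $\psi(g_j)\cdot\bigl(\psi(f_c)\cdots\psi(f_1)\bigr)=\psi(f_{c+j-1})\cdots\psi(f_j)$. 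The right-hand side is exactly $G_j\cdot(F_c\cdots F_1)$ by the definition of $G_j$, and since $F_c\cdots F_1\ne 0$ we cancel to conclude $\psi(g_j)=G_j$. Hence column $c$ of $\Delta(\psi\circ f)$ is precisely $\psi\circ g$. By hypothesis $g$ is \bin\ (this is what ``\bin\ at level $c$'' means), so Lemma \ref{bin-basic}(3) shows $\psi\circ g$ is \bin; that is, $\psi\circ f$ is \bin\ at level $c$.

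The two ratio/telescoping manipulations are routine, and the only step needing care is the commuting claim in part (2). The subtlety is that $\psi$ is multiplicative only on products of integers, not a priori on quotients, so one cannot simply ``distribute'' $\psi$ across the fraction defining $g_j$. Clearing denominators first --- legitimate precisely because $g_j$ is a genuine nonzero integer once $f$ is \bin\ at level $c$ --- sidesteps this and reduces everything to the homomorphism property on products. I expect this denominator-clearing observation to be the main (though modest) obstacle; everything else follows directly from Lemma \ref{bin-basic}.
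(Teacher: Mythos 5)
Your proposal is correct and follows essentially the same route as the paper: for (1) the paper uses the identical telescoping identity $\fbin{d}{j}_f = \frac{f_d}{f_{d-j}}\fbin{d-1}{j}_f$ to conclude that each column of $\Delta(f)$ is itself a divisor-chain and then invokes Lemma \ref{bin-basic}(2), and for (2) it simply cites Lemma \ref{bin-basic}(3). The only difference is that the paper leaves implicit the commuting fact $\fbin{n}{k}_{\psi\circ f}=\psi\bigl(\fbin{n}{k}_f\bigr)$, which you prove carefully by clearing denominators --- a worthwhile detail, since $\psi$ is multiplicative only on integers, but not a departure in method.
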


\begin{proof}

Statement (2) follows from Lemma \ref{bin-basic}(3).  (1) Suppose $f$ is a divisor-chain.  Let $C_j$ be column $j$ of $\Delta(f)$, so that $C_j(n) = \fbin{n + j - 1}{j}_f$.  Since $\fbin{d}{\rule{0mm}{4mm}j}_f = \frac{f_{d}}{f_{d-j}}\fbin{\rule{0mm}{2mm}d-1}{\rule{0mm}{3.5mm}j}_f$ and  $\frac{f_d}{f_{d-j}}$ is an integer whenever $0 \le j < d$, we conclude that $C_j$ is a divisor-chain. Therefore $C_j$ is \bin\ by Lemma \ref{bin-basic}(2). 
\end{proof}

\medskip 
As another motivating example, let $G_2(n) = 2^n - 1 = (1, 3, 7, 15, \dots)$.   Here are the first few rows of its triangle. Those integer entries indicate that $G_2$ is \bin.

\spc {\small 
\begin{tabular}{c|ccccccccc}
       & 0   & 1  & 2   &  3    &   4     &    5    &    6       \\    
\hline
   0  & 1 \\ 
   1  & 1  &  1 \\
   2  & 1  &  3  & 1 \\
   3  & 1  &  7  &  7   & 1  \\
   4  &  1 &  15  &  35   &   15   &   1 \\
   5  &  1 &  31  &  155 &  155  &    31    &   1  \\
   6  &  1 &  63  & 651  &  1395  &   651   &    63  &    1  \\
   \vdots &   \vdots &  \vdots &  \vdots &  \vdots &  \vdots &  \vdots &   \vdots & $ \ddots $ 
\end{tabular}}\\[4pt]
\spd  {\footnotesize Triangle for $G_2(n) = 2^n-1$.  }

\bigskip

Let  $G_q(n) = \frac{q^n - 1}{q - 1} = 1 + q + \cdots + q^{n-1}$.  
The entries  $\fbin{n}{k}_{G_q}$ in triangle $\Delta(G_q)$ are often called ``$q$-nomial'' (or Gaussian) coefficients and have appeared in many articles since Gauss  \cite{cG08} introduced them in 1808. E.g. see \cite{KW89}.  
 ``Fibonomial'' coefficients are the entries of  $\Delta(F)$  where $F$ is the Fibonacci sequence.   
As Lucas \cite{eL78} and Carmichael \cite{rC13} pointed out long ago,  $G_q$ and $F$ are examples of \emph{Lucas sequences}.\footnote{Ballot \cite{cB14} refers to entries of $\Delta(L)$ as ``Lucanomial'' coefficients. }  Those are integer sequences $L$ that satisfy a linear recurrence of order 2 and have $L(0) = 0$. Such $L$ is a constant multiple of the sequence $U$ in Definition \ref{lucas}.  Carmichael's Theorem VII in \cite{rC13} implies that every Lucas sequence is \bin.  \\
\spz Our Theorem \ref{DP_Bin} and Lemma \ref{lucas} below imply that every Lucas sequence is \bin\ at every level.

\bigskip

\begin{aside} \label{log-bin} \!
The definition of \bin\ sequences can be restated in additive form.   
Let $v_p(n)$ be the exponent of the prime $p$ in $n$. That is,  $n = \prod\limits_{p} p^{v_p(n)}$
.  
\begin{enumerate}[label=(\arabic*),topsep=-2pt]  
\item Suppose $(a) = (a_1, a_2, \dots)$ is a never-zero sequence of integers.  Then  $(a)$ is \bin\ \;if and only if\;  $(p^{v_p(a_n)})$ is \bin\ for every prime $p$. 

\item For an integer sequence $b = (b_1, b_2, \dots)$, define $s_b(n) = b_1 + \cdots + b_n$. 
Suppose $c > 1$ is an integer. Then:

\spz  $(c^{b_n})$ is \bin\ \;$\iff$ \ 
  $s_b(m) + s_b(n) \le s_b(m+n)$ \   for every $m, n \in \zz^+$.   
\end{enumerate}
\smallskip

Each property defined below has an additive version.  But those reformulations do not seem to provide significantly better proofs of the Theorems. 

\end{aside}

%%%%%%%%%%%%%%%%%%%%%%%%%%%%%%%%%%%%%%%%
\newpage
\section{ Binomid Pyramids.}

For a sequence $f$, the \Bim\ Pyramid $\bp(f)$ is formed by stacking the \bim\ triangles of the row-sequences of the triangle  $\Delta(f)$. In this section we verify that the \bim\ triangles for the column-sequences of $\Delta(f)$ appear by slicing that Pyramid along planes parallel to a face.

\medskip
Our sequences start with index 1. We choose notations so that the row and column sequences begin with index 1.  We often restrict attention to sequences $f$ with $f_1 = 1$.  

\begin{dfn} \label{row-col-def}
If $f$ is a sequence with $f_1 = 1$, define 
the Row and Column sequences of its triangle $\Delta(f)$ by:

\spc   $R_m(N) = \fbin{\rule{0mm}{2.5mm} m}{\rule{0mm}{4mm} N-1}_f$, \spz and  \spz  $C_j(N) = \fbin{N + j - 1}{j}_f$. 
\end{dfn}

\medskip

Row sequence $R_m$ has only $m+1$ entries:   

\spz $R_m = \Big( \fbin{ \rule[-0.5mm]{0mm}{3mm}m}{0}_f, \fbin{ \rule[-0.5mm]{0mm}{3mm}m}{1}_f, \fbin{\rule[-0.5mm]{0mm}{3mm}m}{2}_f, \dots , \fbin{ \rule[-0.5mm]{0mm}{3mm}m}{m}_f  \Big)$, with  $R_m(N)$ undefined for  $N > m+1$. 

For example: \\
\spz $R_0 = (1), \; R_1 = (1,1)$, \; $R_2 = (1, f_2, 1)$, \; \dots \; ,
 $R_m = (1, f_m,  \frac{f_mf_{m-1}}{f_2}, \,\dots\, , f_m, 1)$.\\

Column sequences are infinite, with entries 

\spa $C_j = \Big( \fbin{\rule[-0.5mm]{0mm}{3mm}j}{j}_f, \fbin{j+1}{j}_f, \fbin{j+2}{j}_f, \ \dots \Big) 
   = (1, f_{j+1}, \ \frac{f_{j+2}f_{j+1}}{f_2}, \ \dots )$. 

Note that $C_0 = (1, 1, 1, \dots)$ and $C_1 = (1, f_2, f_3, \dots ) = f$.

\medskip

\Bim\ triangles for the first few Pascal rows have 3-fold rotational symmetry.  Proposition \ref{symmetry} shows that this follows from the left/right symmetry of each row. 
 In our triangle displays,  this says that each column matches a downward diagonal. 

\medskip

For example, sequence $f = (1, a, b, b, a, 1)$ appears 3 times in its triangle $\Delta(f)$:

\spc {\small 
\begin{tabular}{c|cccccccccccc}
        &  {\sz 0}   & {\sz 1}  & {\sz 2}   &  {\sz 3}    &  {\sz 4}  &  {\sz 5}  &  {\sz 6}  \\    
\hline
   {\sz 0}  & 1 \\ 
   {\sz 1}  & 1  &  1 \\
   {\sz 2}  & 1  &  $a$  & 1 \\
   {\sz 3}  & 1  &  $b$  &   $b$         & 1  \\
   {\sz 4}  & 1  &  $b$  &  {\scriptsize $b^2\!/a$}   & $b$ &  1 \\
   {\sz 5}  & 1  &  $a$  &   $b$         & $b$  &  $a$  &  1  \\
   {\sz 6}  & 1  &   1    &     1           & 1  &  1  &  1  &  1 \\
\end{tabular}
}

\bigskip
\begin{lem}\label{symmetry}
Suppose $f= (f_1, f_2, \dots, f_n)$ is a symmetric list of $n$ terms. 
That is:  $f_k = f_{n+1 - k}$. Then the triangle  $\Delta(f)$ has 3-fold rotational symmetry.  
\end{lem}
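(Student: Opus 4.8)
The plan is to reduce the geometric claim of $3$-fold rotational symmetry to a single algebraic identity about $f$-factorials, and then to produce a closed form for each entry of $\Delta(f)$ that makes the symmetry transparent.

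First I would introduce triangular coordinates. Since $f$ has $n$ terms, $\Delta(f)$ occupies rows $N = 0, 1, \dots, n$, and each of its three edges carries $n+1$ entries. To the entry $\fbin{N}{k}_f$ sitting in row $N$, column $k$, I attach the triple $(a,b,c) = (k,\; N-k,\; n-N)$, recording the ``distances'' from the left edge, the right edge, and the bottom edge. These are nonnegative and satisfy $a+b+c = n$; the three vertices get triples $(n,0,0)$, $(0,n,0)$, $(0,0,n)$. A rotation by $120^\circ$ cyclically permutes the three edges and hence cyclically permutes $(a,b,c)$, so the triangle has $3$-fold rotational symmetry precisely when the value of an entry depends only on the cyclic class of its triple, i.e. when the entry at $(a,b,c)$ equals the entry at $(b,c,a)$.

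Next I would record the reflection identity forced by the hypothesis $f_k = f_{n+1-k}$. Reindexing $\la n-m \ra_f = f_1 f_2 \cdots f_{n-m}$ via $f_i = f_{n+1-i}$ rewrites it as $f_n f_{n-1}\cdots f_{m+1}$, so that
\[
  \la m \ra_f \,\la n-m \ra_f \;=\; (f_1\cdots f_m)(f_{m+1}\cdots f_n) \;=\; \la n \ra_f
\]
for every $m$ with $0 \le m \le n$; in particular $\la n-c \ra_f = \la n \ra_f / \la c \ra_f$. Combining this with the factorial formula, and writing the entry in coordinates as $\fbin{N}{k}_f = \fbin{a+b}{a}_f = \la a+b \ra_f /(\la a \ra_f\,\la b \ra_f)$, substitution of $a+b = n-c$ yields the closed form
\[
  \fbin{N}{k}_f \;=\; \frac{\la n \ra_f}{\la a \ra_f\,\la b \ra_f\,\la c \ra_f}.
\]
The right-hand side is manifestly symmetric in $a,b,c$, so the entry is invariant under every permutation of its triple; in particular it is invariant under the $3$-cycle $(a,b,c)\mapsto(b,c,a)$, which is exactly the desired rotational symmetry. (One in fact obtains the full $S_3$ symmetry, the transpositions recovering the already-known left/right symmetry $\fbin{N}{k}_f = \fbin{N}{N-k}_f$.)

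I expect the only genuine obstacle to be the bookkeeping of the first step: fixing the precise correspondence between a $120^\circ$ rotation and a cyclic shift of $(a,b,c)$, so that the informal phrase ``$3$-fold rotational symmetry'' is translated into the exact algebraic statement to be verified. Once the coordinates are chosen and the reflection identity is in hand, the symmetric closed form and the conclusion follow immediately.
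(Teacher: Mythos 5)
Your proof is correct, but it takes a genuinely different route from the paper's. The paper argues by showing the identity ``Column $c$ of $\Delta(f)$ equals Row $n-c$ of $\Delta(f)$'': it writes the $(k+1)^{\text{st}}$ entries of both sequences over the common denominator $\la k \ra_f$ and matches the numerators factor by factor using $f_{c+k} = f_{n-c-k+1}$, etc.; this column-row coincidence, combined with the left/right symmetry $\fbin{N}{k}_f = \fbin{N}{N-k}_f$ that holds for every sequence, yields the rotation (two distinct reflections compose to a rotation, a point the paper leaves implicit). You instead prove the factorial identity $\la m \ra_f \, \la n-m \ra_f = \la n \ra_f$ and deduce the closed form
\[
  \abin{N}{k}_f \;=\; \frac{\la n \ra_f}{\la a \ra_f\,\la b \ra_f\,\la c \ra_f},
  \qquad (a,b,c) = (k,\;N-k,\;n-N),
\]
which is manifestly invariant under all of $S_3$ acting on the triangular coordinates. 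Your version buys more for roughly the same effort: it makes precise what ``3-fold rotational symmetry'' means (cyclic permutation of $(a,b,c)$), it exhibits all six symmetries of the triangle in one formula rather than deriving the rotation from two separate reflections, and the identity $\la m \ra_f\,\la n-m \ra_f = \la n \ra_f$ is a reusable structural fact about symmetric lists. The paper's term-by-term matching is slightly more elementary, needing no coordinates at all, but it verifies only the one reflection and relies on the reader to assemble the geometric conclusion.
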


\begin{proof}  We need to prove: \    Column $c$ = Row ${n-c}$, \;whenever $0 \le c \le n$. 

The $(k+1)^{\text{st}}$ entries of $C_c$ and $R_{n-c}$ are 

\spa  $\abin{\!c+ k\!}{k}_f  = \ds\frac{f_{c+k}f_{c+k-1}\cdots f_{c+1}}{\la k \ra_f}$  \; and \;  $ \abin{\!\rule[-0.5mm]{0mm}{3mm}n - c\!}{k}_f = \ds\frac{f_{n-c}f_{n-c-1}\cdots f_{n-c-k+1}}{\la k \ra_f}$

\smallskip
By symmetry of $f$, those numerators are equal term by term:

\spb  $f_{c+k} =  f_{n-c-k+1}, \quad f_{c+k-1} = f_{n-c-k+2}, \quad \dots\ ,   \quad f_{c+1} = f_{n-c}$. 
\end{proof}

\medskip
For the finite sequence above, the symmetry shows that $R_n = (1, 1, \dots, 1)$, \;and  every later row is undefined.  

\bigskip
The \Bim\ Pyramid $\bp(f)$ in Definition \ref{pyramid} is built by stacking the triangles $\Delta(R_m)$.  To show that the triangle $\Delta(C_j)$  appears as a slice of this pyramid, we check that each row of $\Delta(C_j)$ equals a corresponding row and column of the horizontal slice $\Delta(R_m)$.  Numerical observations indicate that

\spa Row $n$ of $\Delta(C_2)$ = Column 2 of $\Delta(R_{n+1})$ = Row $n$ of $\Delta(R_{n+1})$.

\spa Row $n$ of $\Delta(C_3)$ = Column 3 of $\Delta(R_{n+2})$ = Row $n$ of $\Delta(R_{n+2})$.

The general pattern provides the next result: 

\begin{prop} \label{ColRow}
For a sequence $f$ and every $n$ and $m$:

\spa Row $n$ of $\Delta(C_m)$ = Column $m$ of $\Delta(R_{n+m-1})$ = Row $n$ of $\Delta(R_{n+m-1})$.

\end{prop}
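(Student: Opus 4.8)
The plan is to split the double equality into two parts and handle them separately. The right‑hand equality, that Column $m$ and Row $n$ of $\Delta(R_{n+m-1})$ coincide, is a direct consequence of Lemma \ref{symmetry}. The row sequence $R_{n+m-1}$ has exactly $n+m$ terms and is symmetric, since by the symmetry of binomid coefficients $R_{n+m-1}(N) = \fbin{n+m-1}{N-1}_f = \fbin{n+m-1}{n+m-N}_f = R_{n+m-1}(n+m+1-N)$. Applying Lemma \ref{symmetry} to this symmetric list of $n+m$ terms, Column $c$ equals Row $(n+m)-c$ in $\Delta(R_{n+m-1})$ for every $c$; taking $c=m$ gives Column $m$ = Row $n$. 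So it remains only to establish the left‑hand equality, Row $n$ of $\Delta(C_m)$ = Row $n$ of $\Delta(R_{n+m-1})$, after which the whole chain follows.

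Both of these are lists of $n+1$ numbers, so it suffices to match them entry by entry, i.e. to show $\fbin{n}{k}_{C_m} = \fbin{n}{k}_{R_{n+m-1}}$ for each $0 \le k \le n$. First I would pass to $f$-factorial notation, writing $\langle a\rangle$ for $\la a \ra_f$, and substitute the two defining formulas $C_m(N) = \fbin{N+m-1}{m}_f = \langle N+m-1\rangle/(\langle m\rangle\langle N-1\rangle)$ and $R_{n+m-1}(N) = \fbin{n+m-1}{N-1}_f = \langle n+m-1\rangle/(\langle N-1\rangle\langle n+m-N\rangle)$ into the product formula $\fbin{n}{k}_g = g(n)\cdots g(n-k+1)/\big(g(k)\cdots g(1)\big)$ for a sequence $g$. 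The decisive observation is that each of the $k$ factors of $C_m$ contributes one $\langle m\rangle$ and each of the $k$ factors of $R_{n+m-1}$ contributes one $\langle n+m-1\rangle$; these constant powers occur to the same degree $k$ in the top and bottom products and cancel. After cancellation each side collapses to the same ratio of four blocks of consecutive $f$-factorials, namely
\[
\fbin{n}{k}_{C_m} \;=\; \fbin{n}{k}_{R_{n+m-1}} \;=\; \frac{\Big(\ds\prod_{t=0}^{k-1}\langle t\rangle\Big)\Big(\ds\prod_{t=n+m-k}^{n+m-1}\langle t\rangle\Big)}{\Big(\ds\prod_{t=n-k}^{n-1}\langle t\rangle\Big)\Big(\ds\prod_{t=m}^{m+k-1}\langle t\rangle\Big)},
\]
which yields the left‑hand equality and completes the argument.

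I expect the only real obstacle to be bookkeeping: after $\langle m\rangle^{k}$ and $\langle n+m-1\rangle^{k}$ cancel, one must reindex the four surviving products and confirm that the ranges $\{0,\dots,k-1\}$, $\{n+m-k,\dots,n+m-1\}$, $\{n-k,\dots,n-1\}$, $\{m,\dots,m+k-1\}$ emerge identically from both expansions. There are no inequalities or integrality conditions to manage here, since the claim is an identity of rational numbers valid for every sequence $f$; the integrality of the entries is a separate question treated elsewhere. A quick sanity check with $n=m=2$, where $C_2=T$ and $R_3=(1,3,3,1)$ both produce Row $2=(1,3,1)$, confirms the index alignment before committing to the general computation.
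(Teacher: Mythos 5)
Your proposal is correct and takes essentially the same route as the paper: the second equality comes from Lemma \ref{symmetry} applied to the symmetric $(n+m)$-term list $R_{n+m-1}$, and the first from the entry-by-entry identity $\fbin{n}{k}_{C_m} = \fbin{n}{k}_{R_{n+m-1}}$, verified by expanding everything in $f$-factorials. The only difference is that you actually carry out the cancellation the paper calls ``straightforward, but very long'' and omits, and your resulting four-block formula is the correct outcome of that computation.
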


\begin{proof}
Since $R_k$ has $k+1$ terms, Lemma \ref{symmetry} shows:

\spa Column $m$ of $\Delta(R_k)$ = Row $k+1 - m$ of $\Delta(R_k)$.  

This proves the second equality in the statement of the Proposition. To complete the proof we  will show: \; $\fbin{n}{k}_{C_m} \;=\; \fbin{n}{k}_{R_{n+m - 1}}$ \ for every $n, k, m$.

Those sequences involved are:

\spa  $C_m(N) = \fbin{N+m-1}{m}_f$ \ and \ $R_{n+m-1}(N) = \fbin{n+m-1}{N-1}_f$. 
 Then

\medskip
\spb $\abin{n}{k}_{C_m} \!\!\!=
   \dfrac{ \phantom{.}\fbin{n+m - 1}{m}_f \fbin{n+ m-2}{m}_f\cdots \fbin{n+m-k}{m}_f }{\fbin{k+m - 1}{m}_f \fbin{k+m-2}{m}_f \;\; \cdots \;\; \fbin{m}{ \rule[-0.5mm]{0mm}{3mm}m}_f }$ \quad and 

\medskip
\spb $\abin{n}{k}_{R_{n+m-1}} \!\!\!\!= 
\dfrac{ \fbin{n+m-1}{n-1}_f \fbin{n+m-1}{n-2}_f \cdots \fbin{n+m-1}{n-k}_f }{ \fbin{n+m-1}{k-1}_f \fbin{n+m-1}{k-2}_f \cdots \fbin{n+m-1}{0}_f }$.

\medskip

The stated equality becomes a straightforward, but very long, calculation.  Substitute the $f$-factorial definitions for all those \bin\ coefficients and simplify the fractions.  We omit the many details.

\end{proof}

This formula shows that in the pyramid $\bp(f)$:  slices parallel to a face do yield the \bim\ triangles for the columns of $\Delta(f)$.

\begin{cor} \label{ColRowBin}
 If $f$ is a sequence, then:

\spa all columns of $\Delta(f)$  are \bin\ \; $\iff$ \;  all rows of $\Delta(f)$ are \bin.

Those conditions hold  when   $f$ is \bin\ at every level, as in Definition \ref{pyramid}.
\end{cor}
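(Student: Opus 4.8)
The plan is to derive the equivalence directly from the entry-wise identity established in Proposition \ref{ColRow}, namely $\fbin{n}{k}_{C_m} = \fbin{n}{k}_{R_{n+m-1}}$, valid for all admissible $n,k,m$. Recall that a sequence $g$ is \bin\ exactly when every entry $\fbin{n}{k}_g$ of $\Delta(g)$ is an integer; so the condition ``all columns of $\Delta(f)$ are \bin'' is the assertion that $\fbin{n}{k}_{C_m} \in \zz$ for all $n,k,m$, while ``all rows are \bin'' is the assertion that $\fbin{n}{k}_{R_M} \in \zz$ for all $n,k,M$. The identity says that these two triply-indexed families of rational numbers are literally the same numbers, merely reindexed, so the two divisibility conditions must coincide.

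For the forward implication I would assume every column $C_m$ is \bin\ and fix an arbitrary entry $\fbin{n}{k}_{R_M}$ of some row triangle. Because this entry is defined, the list $R_M$ must carry at least $n$ terms, forcing $n \le M+1$; hence $m := M - n + 1$ satisfies $m \ge 0$ and $n + m - 1 = M$. Proposition \ref{ColRow} then rewrites $\fbin{n}{k}_{R_M} = \fbin{n}{k}_{C_m}$, which is an integer by hypothesis, so $R_M$ is \bin\ for every $M$. The reverse implication is the mirror image: assuming every row is \bin, an arbitrary column entry $\fbin{n}{k}_{C_m}$ equals $\fbin{n}{k}_{R_{n+m-1}}$, an integer because $R_{n+m-1}$ is \bin\ (and since $m \ge 0$, the row $R_{n+m-1}$ has its $n+m$ terms, enough for this entry to be defined). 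This yields the stated $\iff$.

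For the closing sentence I would simply unwind Definition \ref{pyramid}: ``$f$ is \bin\ at every level'' means by definition that each column of $\Delta(f)$ is \bin, which is precisely the left-hand condition of the equivalence just proved. Hence both conditions hold, and in particular every row triangle $\Delta(R_m)$ is integral, i.e.\ the pyramid $\bp(f)$ has all integer entries.

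I expect no serious obstacle here: the substance is entirely contained in Proposition \ref{ColRow}, and the corollary is essentially a repackaging of that identity. The only care needed is the index bookkeeping in the two directions --- verifying that $m = M - n + 1$ is nonnegative so that the column $C_m$ genuinely exists, and that the row $R_{n+m-1}$ carries enough terms for $\fbin{n}{k}_{R_{n+m-1}}$ to be defined --- both of which follow from the domains of definition of the row- and column-sequences in Definition \ref{row-col-def}.
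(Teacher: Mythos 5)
Your proposal is correct and matches the paper's approach: the paper treats this corollary as an immediate consequence of Proposition \ref{ColRow}, exactly as you do, with your index bookkeeping ($m = M - n + 1 \ge 0$, and $R_{n+m-1}$ having enough terms) being the only detail the paper leaves implicit. Nothing further is needed.
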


\bigskip
Note.  The 3-fold symmetry of $\Delta(R_m)$ (in Proposition \ref{symmetry}) implies that the formula in Proposition \ref{ColRow} is equivalent to:

\spc  $ \fbin{n}{\rule{0mm}{4mm}k}_{C_m} \;=\; \fbin{k + m}{m}_{R_{n+m - 1}}$, for every $n, k$, and $m$.

\bigskip

%%%%%%%%%%%%%%%%%%%%%%%%%%%%%%%%%%%%%%%%
%%%%%%%%%%%%%%%%%%%%%%%%%%%%%%%%%%%%%%%%
\newpage

\section{ Divisor-Product Sequences.}

\begin{dfn} \label{div_prod_def}
For a sequence $g$, define sequence $\pp(g)$ by: \      $\pp(g)(n): =  \prod\limits_{d|n} g(d).$ \\[-3pt]
Sequence $f$ is a \textit{divisor-product}  if $f = \pp(g)$ for an integer sequence $g$.   
\end{dfn}

\medskip
That product notation indicates that $d$ runs over the positive integer divisors of $n$.  
Cyclotomic polynomials provide motivation.
 
Define the homogeneous polynomials  $\Phi_n(x, y)$ in $\zz[x, y]$ by requiring: 

\hspace*{50mm}  $ \displaystyle \;\; x^n - y^n \;=\; \prod_{d \mid n} \Phi_d(x, y).$  \\[-5mm]

For example 
\begin{align}
\spa \Phi_1(x, y) &=\; x - y
      \spa & \Phi_4(x, y) &=\; x^2 + y^2 \notag \\
\spa \Phi_2(x, y) &=\; x + y
      \spa & \Phi_5(x, y) &=\;  x^4 + x^3y + x^2y^2 + xy^3 + y^4 \notag \\
\spa \Phi_3(x, y) &=\; x^2 + xy + y^2
      \spa & \Phi_6(x, y) &=\;  x^2 - xy + y^2   \notag 
\end{align}

Note that $\Phi_n(y, x) = \Phi_n(x, y)$ for every $n > 1$. 
Each (inhomogeneous) cyclotomic polynomial $\Phi_n(x) = \Phi_n(x, 1)$ is monic of degree 
$\varphi(n)$ with integer coefficients.\footnote{Further information appears in many number theory texts.  One convenient reference is \\ 
 \url{https://en.wikipedia.org/wiki/Cyclotomic_polynomial}. }

For example, the sequence $G_2 = (2^n - 1) = (1, 3, 7, 15, 31, 63, 127, 255, \dots)$ is a divisor-product since it factors as 

\hspace*{50mm}  $2^n - 1 = \prod\limits_{d|n} \Phi_d(2)$,

and  $\big( \Phi_n(2) \big) = (1, 3, 7, 5, 31, 3, 127, 17, 73, 11, \dots)$ has integer entries. \medskip

These sequences are ``divisible'' in the following sense.

\begin{dfn} \label{divi}
An integer sequence is \emph{divisible} if for $k, n \in \zz^+$:

\spb   $k \mid n$ \ implies \ $f(k) \mid f(n)$.

Such $f$ is called a \emph{divisibility sequence} or a \emph{division sequence}.

\end{dfn}

\begin{lem}  \label{DP_basics} \ 

\begin{enumerate}[label=(\arabic*),topsep=-2pt]  
\item If $f$ is a divisor-product then $f$ is divisible.

\item  Let $G_{a,b}(n) = \dfrac{\ a^n - b^n}{a - b} = a^{n-1} + a^{n-2}b + \cdots + ab^{n-2} + b^n$.  \\[3pt]
If $a, b$ are integers, then $G_{a,b}$ is a divisor-product.

\item  For $c \in \zz$, the  sequences  \\[5pt]
\spz  $I = (n) = (1, 2, 3, \dots)$, \; $(c) = (c, c, c, \dots )$, \;and\; $(c^n) = (c, c^2, c^3, \dots )$\\[5pt]
are divisor-products.  \\[-15pt]

\item \label{DP_prod} If $(a_n)$ and $(b_n)$ are divisor-products, then so is $(a_nb_n)$.

\item Suppose the integer sequence $\psi$ is homomorphic (as in Lemma \ref{bin-basic}).  \\
If $f$ is a divisor-product then so is $\psi \circ f$.

\end{enumerate}
\end{lem}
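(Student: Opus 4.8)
The whole lemma turns on a single structural fact about the operator $\pp$: if $k \mid n$ then every divisor of $k$ divides $n$, so the ``local factors'' $g(d)$ making up $\pp(g)(n) = \prod_{d\mid n} g(d)$ respect divisibility and factor through pointwise operations. The plan is to dispatch each part by either exhibiting an explicit integer sequence $g$ with $f = \pp(g)$ or by manipulating the factors directly. For part (1), suppose $f = \pp(g)$ and $k \mid n$. Then $\{d : d \mid k\} \subseteq \{d : d \mid n\}$, so $f(n) = f(k)\cdot\prod_{d \mid n,\, d \nmid k} g(d)$, and the trailing factor is an integer, giving $f(k) \mid f(n)$.

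Parts (4) and (5) are purely formal and I would do them next, since they also feed part (3). If $a = \pp(g)$ and $b = \pp(h)$, then $a_n b_n = \prod_{d\mid n} g(d)h(d) = \pp(gh)(n)$ where $(gh)(d) = g(d)h(d)$ is an integer sequence, proving (4). If $f = \pp(g)$ and $\psi$ is homomorphic, I would extend $\psi(mn)=\psi(m)\psi(n)$ to finite products by induction, so that $\psi(f(n)) = \psi\bigl(\prod_{d\mid n} g(d)\bigr) = \prod_{d\mid n}\psi(g(d)) = \pp(\psi\circ g)(n)$, proving (5).

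For part (2), I would use the cyclotomic factorization $x^n - y^n = \prod_{d\mid n}\Phi_d(x,y)$ recorded above. Since $\Phi_1(x,y) = x - y$ divides $x^n - y^n$ in $\zz[x,y]$, there is a polynomial identity $\frac{x^n - y^n}{x - y} = \prod_{d\mid n,\, d>1}\Phi_d(x,y)$; evaluating at $(x,y)=(a,b)$ gives $G_{a,b}(n) = \prod_{d\mid n,\, d>1}\Phi_d(a,b)$ for all integers $a,b$ (being a polynomial identity, it covers the degenerate case $a=b$ automatically). Setting $g(1) = 1$ and $g(d) = \Phi_d(a,b)$ for $d>1$ then exhibits $G_{a,b} = \pp(g)$, and $g$ is an integer sequence because $\Phi_d \in \zz[x,y]$.

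Part (3) is where the real work lies, and writing $I = (n)$ as a divisor-product is the main obstacle. The constant sequence $(c)$ is trivial: take $g(1)=c$ and $g(d)=1$ for $d>1$. The sequence $(c^n)$ follows immediately from parts (2) and (4), since $(c^{n-1}) = G_{c,0}$ and $(c)$ are divisor-products, so their pointwise product $(c^n)$ is too (alternatively $g(d) = c^{\varphi(d)}$ works, using $\sum_{d\mid n}\varphi(d) = n$). For $I = (n)$, the naive approach of Möbius-inverting $\prod_{d\mid n} g(d) = n$ produces $g(n) = \prod_{d\mid n} d^{\mu(n/d)}$, which is a priori only rational, so the genuine content is proving integrality. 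I would sidestep the fraction and construct $g$ prime by prime: for each prime $p$, assign exponent $1$ to every positive power $p^k$ and $0$ to every other index, so that $\sum_{d\mid n}(\text{exponent at }d) = v_p(n)$, since exactly the divisors $p^1,\dots,p^{v_p(n)}$ contribute. Thus $g(d) = p$ when $d = p^k$ is a prime power and $g(d)=1$ otherwise is an integer sequence with $\prod_{d\mid n} g(d) = \prod_p p^{v_p(n)} = n$. This resolves the obstacle and completes the lemma.
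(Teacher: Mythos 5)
Your proof is correct and takes essentially the same approach as the paper: the same explicit witnesses appear in both, namely $g(1)=1$, $g(d)=\Phi_d(a,b)$ for part (2), the prime-power sequence ($g(d)=p$ when $d=p^k$, else $1$) for $I$, the sequence $(c,1,1,\dots)$ for $(c)$, and the formal identities $\pp(gh)=\pp(g)\pp(h)$ and $\psi\circ\pp(g)=\pp(\psi\circ g)$ for parts (4) and (5). The only cosmetic differences are that you get $(c^n)$ from $G_{c,0}$ together with part (4) instead of the paper's $g(d)=c^{\varphi(d)}$ (which you also note), and you dispose of the case $a=b$ in part (2) by observing the factorization is a polynomial identity rather than checking $G_{a,a}(n)=na^{n-1}$ separately.
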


\begin{proof}

(1) follows from Definition \ref{div_prod_def}. Note also that if $f$ is a divisor-product and $d = \gcd(m,n)$, then:  $f(m)f(n) \mid f(mn)f(d)$.
\smallskip

(2) Check that $G_{a,b}(n) = \prod\limits_{d \mid n} g(d)$ where $g(1) = 1$ and $g(n) = \Phi_n(a, b)$ when $n > 1$.  This remains valid when $a = b$ since $G_{a, a}(n) = na^{n-1}$

(3)  Since $I = G_{1,1}$ we may apply (2).  Explicitly, $I =\pp(j)$  where \\
\spb $j(n) = 
\begin{cases}
   p  &\text{ if } n = p^k > 1 \text{ is a prime power}, \\
   1 & \text{ otherwise}.
\end{cases}$ \\

\spz $(c) = \pp(\delta_c)$ where $\delta_c = (c, 1, 1, 1, \dots)$.

\spz $(c^n) = \pp(c^{\varphi(n)})$, where $\varphi$ is the Euler function. 
\smallskip

(4) and (5): Check that $\pp(gh) = \pp(g)\pp(h)$, and that $\psi \circ \pp(g) = \pp(\psi \circ g)$.  \end{proof}

\medskip
For any never-zero sequence $f$ in $\qq$,  there exists a unique $g$ in $\qq$ with $f = \pp(g)$.   
The multiplicative version of the M\"{o}bius inversion formula provides an explicit formula for $\pp^{-1}(f)$. 

\medskip

\begin{lem}[M\"{o}bius inversion] \label{mobius} 
If $f$ is a never-zero sequence and $f = \pp(g)$ then: 

\spd  $g(n) =   \prod\limits_{d|n}f(d)^{\mu(n/d)}$.  
\end{lem}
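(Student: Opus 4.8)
The plan is to verify the identity by direct substitution, working inside the multiplicative group $\qq^{\times}$ rather than taking logarithms (which would be awkward since the values $f(d)$ may be negative). The one external ingredient is the defining property of the Möbius function: $\sum_{c \mid m} \mu(c)$ equals $1$ when $m = 1$ and $0$ otherwise. Since $f$ is never-zero, each $f(d)$ is a nonzero rational, so every factor $f(d)^{\mu(n/d)}$ (with exponent in $\{0, \pm 1\}$) lies in $\qq^{\times}$, and I may freely rearrange finite products and collect exponents there.

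First I would expand the right-hand side using $f(d) = \pp(g)(d) = \prod_{e \mid d} g(e)$:
\[
\prod_{d \mid n} f(d)^{\mu(n/d)}
\;=\; \prod_{d \mid n}\Big( \prod_{e \mid d} g(e) \Big)^{\mu(n/d)}
\;=\; \prod_{d \mid n}\ \prod_{e \mid d} g(e)^{\mu(n/d)}.
\]
Next I would interchange the order of the two products, grouping all occurrences of a fixed $g(e)$. A factor $g(e)$ appears precisely for those $d$ with $e \mid d \mid n$, so after the interchange the exponent attached to $g(e)$ is $\sum_{e \mid d \mid n} \mu(n/d)$.

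The key step is to reindex this inner sum. As $d$ runs over the divisors of $n$ that are multiples of $e$, the complementary divisor $c = n/d$ runs exactly over the divisors of $n/e$: indeed $d \mid n$ forces $c \mid n$, and $e \mid d$ is equivalent to $ec \mid n$, i.e. $c \mid n/e$. Hence the exponent of $g(e)$ equals $\sum_{c \mid (n/e)} \mu(c)$, which by the Möbius identity above is $1$ when $e = n$ and $0$ for every other divisor $e$ of $n$. Therefore all factors collapse except $g(n)^{1}$, giving $\prod_{d \mid n} f(d)^{\mu(n/d)} = g(n)$, as claimed.

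I expect the only real care to lie in justifying the reindexing $d \leftrightarrow c = n/d$ and the interchange of the two products cleanly; everything else reduces to the standard Möbius identity. (Existence and uniqueness of $g$ with $f = \pp(g)$ were already noted in the preceding paragraph, so here I need only check that this explicit formula recovers that $g$.)
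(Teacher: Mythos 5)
Your proof is correct, and it is the standard argument for the multiplicative form of M\"{o}bius inversion: expand $f(d) = \prod_{e \mid d} g(e)$, swap the two products, and collapse the exponent $\sum_{c \mid (n/e)} \mu(c)$ via the defining identity of $\mu$. There is nothing in the paper to compare it against: the paper explicitly omits the proof, calling it a well-known result and pointing to standard references, so your write-up fills that gap rather than diverging from anything. One small point worth making explicit: when $\mu(n/d) = -1$ you distribute the inverse across the product $\prod_{e \mid d} g(e)$, which requires each $g(e) \ne 0$, not merely each $f(d) \ne 0$. This is immediate --- if some $g(e) = 0$ then $f(e) = \prod_{e' \mid e} g(e') = 0$, contradicting that $f$ is never-zero --- but since you framed the whole computation as taking place in $\qq^{\times}$, you should record that the individual factors $g(e)$, and not just their products $f(d)$, lie in $\qq^{\times}$.
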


The M\"{o}bius function $\mu(k)$ has values in $\{0, 1, -1 \}$.   By definition,  $f$ is a divisor-product exactly when every $g(n)$ is an integer. The additive form of M\"{o}bius inversion is discussed in many number theory texts.  This multiplicative form is a variant discussed explcitily in some textbooks.\footnote{One good reference is 
\url{https://en.wikipedia.org/wiki/Mobius_inversion_formula}.
}  We omit the proof of this well-known result.

%Kimberling \cite{cK79} proof is a direct consideration of $\Pi_k = \prod f\big( \frac{n}{ p_{i_1} \cdots p_{i_k} }\big) $ a product taken over all sets of $k$ distinct prime factors of $n$.  Then Lemma \ref{Mobius} can be restated as:  $g(n) = \ds\frac{\Pi_0\Pi_2 \cdots }{\Pi_1\Pi_3 \cdots}$.

%%%%%%%%%%%%%%%%%%%%%%%%%%%%%%%%%%%%%%%%%%%%%
\medskip
In 1939, Ward \cite{mW39} pointed out  that divisor-product sequences are  \bin.  The following generalization is the major result of this article.

\medskip
\begin{thm} \label{DP_Bin} A divisor-product sequence is \bin\ at every level. 
\end{thm}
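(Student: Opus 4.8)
The plan is to reduce every integrality question to the nonnegativity of certain exponents, by exploiting the product expansion that already underlies Ward's theorem. Write $f = \pp(g)$ with $g$ integer-valued. Recall that $f$ is \bin\ at every level exactly when each column sequence $C_j(N) = \fbin{N+j-1}{j}_f$ is \bin, i.e. when $\fbin{N}{K}_{C_j}$ is an integer for every $j$ and every $0 \le K \le N$. So the whole theorem comes down to proving that these column-coefficients are integers, and I would attack them through their prime-level product structure rather than by manipulating the ratios directly.

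First I would record the product formula. Since $\la n \ra_f = \prod_{m=1}^{n}\prod_{d\mid m} g(d) = \prod_{d\ge 1} g(d)^{\lfloor n/d\rfloor}$, dividing the factorials gives
\[ \fbin{n}{k}_f = \prod_{d\ge 1} g(d)^{E(n,k,d)}, \qquad E(n,k,d) = \lfloor n/d\rfloor - \lfloor k/d\rfloor - \lfloor (n-k)/d\rfloor. \]
Each $E(n,k,d)$ is the classical carry indicator: it equals $1$ when $(k\bmod d) + ((n-k)\bmod d) \ge d$ and $0$ otherwise, and its nonnegativity is precisely the content of Ward's observation that $f$ is \bin. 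Now substitute $C_j(N) = \prod_d g(d)^{E(N+j-1,\,j,\,d)}$ into the factorial formula. Using $\la m \ra_{C_j} = \prod_{N'=1}^{m} C_j(N') = \prod_d g(d)^{\Sigma(m)}$, where $\Sigma(m) = \Sigma_{j,d}(m) = \sum_{N'=1}^{m} E(N'+j-1,\,j,\,d)$, I obtain
\[ \fbin{N}{K}_{C_j} = \prod_{d\ge 1} g(d)^{\,\Sigma(N)-\Sigma(K)-\Sigma(N-K)}. \]
Because every $g(d)$ is an integer, it now suffices to show that each exponent is nonnegative, i.e. that each $\Sigma_{j,d}$ is superadditive: $\Sigma(a+b)\ge \Sigma(a)+\Sigma(b)$ for all $a,b\ge 0$.

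The crux, and the step I expect to be the main obstacle, is this superadditivity, since the summand $E$ is periodic but not monotone, so $\Sigma$ is not convex and no termwise estimate is available. Here I would use that $E(N'+j-1,\,j,\,d)\in\{0,1\}$ equals $1$ precisely when $(N'-1)\bmod d \ge d-r$, where $r = j\bmod d$ (the case $r=0$ giving $\Sigma\equiv 0$). Writing $t = N'-1$, this identifies $\Sigma(m) = \#\{\,t\in[0,m-1] : t\bmod d \ge d-r\,\}$ as the number of integers below $m$ whose residue mod $d$ lies in the top block $\{d-r,\dots,d-1\}$. Superadditivity then reduces to a single extremal observation: among all windows of a fixed length $b$, the window $[0,b-1]$ — which starts exactly at residue $0$ and so front-loads the ``bad'' low residues $\{0,\dots,d-r-1\}$ — contains the \emph{fewest} top-block residues. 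Hence $\sum_{t=a}^{a+b-1} E \ge \sum_{t=0}^{b-1} E = \Sigma(b)$, and since the left side is $\Sigma(a+b)-\Sigma(a)$, this is exactly the desired inequality.

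With superadditivity established, every exponent $\Sigma(N)-\Sigma(K)-\Sigma(N-K)$ is nonnegative, so each $\fbin{N}{K}_{C_j}$ is a product of nonnegative powers of integers and is therefore an integer. Thus every column of $\Delta(f)$ is \bin, which is precisely the statement that $f$ is \bin\ at every level; by Corollary \ref{ColRowBin} this simultaneously gives that every row of $\Delta(f)$ is \bin\ and that all entries of $\bp(f)$ are integers. I would expect the only genuinely delicate point to be the window/extremal claim; everything else is routine bookkeeping with floor functions, and the reduction to the single prime-power level $g(d)$ is what makes that bookkeeping manageable.
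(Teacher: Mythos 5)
Your proposal is correct and follows essentially the same route as the paper's proof: both expand $\la n \ra_{\pp(g)}$ as $\prod_{d} g(d)^{\lfloor n/d \rfloor}$, reduce integrality of the column-sequence coefficients to nonnegativity of the floor-function exponents $\lfloor (m+j)/d\rfloor - \lfloor m/d\rfloor - \lfloor j/d\rfloor$ summed over windows, recognize this carry indicator as a periodic $0/1$ pattern ($d-r$ zeros followed by $r$ ones), and conclude with the same extremal observation that the initial window of a given length has the minimal sum. The only cosmetic difference is that the paper runs the exponent bookkeeping generically in $\zz[X]$ with indeterminates $x_d$, while you track the exponents of the integers $g(d)$ directly; the combinatorial core is identical.
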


\begin{proof}   %[Proof of Theorem \ref{DP_Bin}]
If $g$ is a never-zero integer sequence, we will prove that $\pp(g)$ is \bin\ at every level, that is:  Every entry of the pyramid $\bp\big( \pp(g) \big)$ is an integer.
To do this, consider a ``generic'' situation.

Let $\zz[X]$ be the polynomial ring built from an infinite sequence $X = (x_1, x_2, x_3, \dots)$ of independent indeterminates.  

\underline{Claim}. Each entry of $\bp\big( \pp(X) \big)$ is a polynomial in $\zz[X]$.  

If this Claim is true, we may substitute $g$ for $X$ to conclude that each term in $\bp\big( \pp(g) \big)$ is in $\zz$, proving the Theorem.
\smallskip

Each entry in the pyramid $\bp\big( \pp(X) \big)$ is a quotient involving products of terms $\fbin{n}{k}_{\pp(X)}$ for various $n$ and $k$.  Then it is a ``rational monomial,'' a quotient of monomials involving the variables $x_1, x_2, \dots\ $. 
\bigskip
For a real number $\alpha$, we write $\lfloor \alpha \rfloor$ for the ``floor function'': the greatest integer $\le \alpha$.  

Note that

\spa $\la n \ra_{\pp(X)} = \prod\limits _{j=1}^n \pp(X)(j) = (x_1)(x_1x_2)(x_1x_3)(x_1x_2x_4)(x_1x_5)(x_1x_2x_3x_6)\cdots  $

\hspace*{25mm} = $\displaystyle x_1^n x_2^{\lfloor n/2 \rfloor}x_3^{\lfloor n/3 \rfloor}\cdots x_n^{\lfloor n/n \rfloor} \ = \  \prod_{r>0} x_r^{\lfloor n/r \rfloor} $.  \hfill (a)

That infinite product (with $r = 1, 2, 3, 4, \dots$) is valid because $\lfloor n/r \rfloor = 0$ when $r > n$.  

For rational monomial $M$ and index $r$, write $v_r(M)$ for the exponent of $x_r$ in the factorization of $M$. That is:

\spd\spb  $\displaystyle M = \prod_{r>0} x_r^{v_r(M)}$. 

Then $v_r(M) \in \zz$, and $v_r(M) = 0$ for all but finitely many $r$. Moreover,  $M$ is a polynomial 
exactly when $v_r(M) \ge 0$ for every index $r$.

Formula (a) shows:\;   $v_r\big( \la n \ra_{\pp(X)} \big)  =  \lfloor n/r \rfloor$.  Therefore the exponent

\spc $\delta_{m,r}(j) :=  v_r\Big( \fbin{m+j}{m}_{\pp(X)} \Big)$ 

has the simpler formula:

\spc  $\displaystyle \delta_{m,r}(j) \;=\;  \left\lfloor \frac{m+j\rule{0mm}{3mm}}{r} \right\rfloor - \left\lfloor \frac{m}{r} \right\rfloor - \left\lfloor \frac{j}{r} \right\rfloor$.   

\smallskip
We will see below that this quantity is either 0 or 1.
\smallskip

To prove that $\pp(X)$ is \bin\ at every level, we need to show: For every $m$, the column sequence $C_m(n) = 
\fbin{m+ n -1}{m}_{\pp(X)}$ is \bin\ in $\zz[X]$.  In other words, for \linebreak 
every $n, a \in \zz^+$:

\spz $\fbin{m}{\rule{0mm}{3.9mm}m}_{\pp(X)}\cdot \fbin{m+1}{m}_{\pp(X)}\cdots \fbin{m+n-1}{m}_{\pp(X)}$ \\

\spb divides\; 
 $\fbin{a+m}{\rule{0mm}{3.5mm}m}_{\pp(X)}\cdot \fbin{a+m+1}{m}_{\pp(X)}\cdots \fbin{a+m+n-1}{m}_{\pp(X)}$ \; in $\zz[X]$. \\

The definition of $\delta_{m,r}(j)$ then shows:

\spz  $\pp(X)$ is \bin\ at level $m$ \ if and only if \ for every $n, a, r \in \zz^+$: \smallskip

\spd $\displaystyle \sum_{j=0}^{n-1} \delta_{m,r}(j) \;\le\; \sum_{j=a}^{a+n-1} \delta_{m,r}(j)$. \hfill (b)

\smallskip

The formula $\delta_{m,r}(j) \;=\; \left\lfloor \frac{m+j\rule{0mm}{3mm}}{r} \right\rfloor - \left\lfloor \frac{m}{r} \right\rfloor - \left\lfloor \frac{j}{r} \right\rfloor$,  implies \ $\delta_{m+rs,r}(j) = \delta_{m,r}(j)$ for any $s \in \zz$.  Then we may alter $m$ to assume $0 \le m < r$.  

Similarly, $\delta_{m,r}(j+rs) = \delta_{m,r}(j)$. Then for fixed $r, m$, the value  $\delta_{m,r}(j)$ depends only on  ($j$ mod $r$).  Consequently, any block of $r$ consecutive terms in the sums in (b) yields the same answer, namely the sum over all values in $\zz/r\zz$.  Then if $n \ge r$ we may cancel the top $r$ terms of each sum in (b) and replace $n$ by $n - r$.  By repeating this process, we may assume $0 \le n < r$.

For real numbers $\alpha, \beta \in [0, 1)$, a quick check shows that

\spc  $\lfloor \alpha + \beta \rfloor - \lfloor \alpha \rfloor - \lfloor \beta \rfloor \;=\;  \begin{cases}
       1  \ \text{ if }   \alpha + \beta \ge 1,  \\
       0  \ \text{ if }      \alpha + \beta < 1.
    \end{cases}$

In our situation $0 \le m, n < r$ and we may write $j \equiv j' \pmod{r}$ where $0 \le j' < r$.   Then \ $\delta_{m,r}(j) \;=\;  \begin{cases}
       1  \ \text{ if }   m + j' \ge r,  \\
       0  \ \text{ if }  m + j' < r.
    \end{cases}$ \ 
This says that the sequence $\delta_{m,r}$ begins with $r - m$ zeros followed by $m$ ones, and that pattern repeats with period $r$.  For instance, when $m = 2$ and $r = 6$ the sequence is

\spc $\delta_{2, 6} = (0, 0, 0, 0, 1, 1, 0, 0, 0, 0, 1, 1, \ \dots)$.

For such a sequence, it is not hard to see that for the sums of any ``window'' of $n$ consecutive terms, the minimal value is provided by the $n$ initial terms.  This proves the inequality in (b) and completes the proof of the Theorem.
\end{proof}

\bigskip
Further examples of divisor-products are provided by sequences that satisfy a linear recurrence of degree 2.   The Fibonacci sequence and $(2^n - 1)$ are included in this class of ``Lucas sequences.''

\begin{lem}\! \label{lucas}
For $P, Q \in \zz$ not both zero, define the \emph{Lucas sequence} $\lu = \lu_{P, Q}$ by setting $\lu(0) = 0$ and $\lu(1) = 1$, and requiring

\spc $\lu(n+2) = P\sdot \lu(n+1) - Q\sdot\lu(n)$ \; for every $n \ge 0$.  

Then  $\lu$ is a divisor-product.

\end{lem}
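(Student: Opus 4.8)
The plan is to connect $\lu = \lu_{P,Q}$ with the divisor-products $G_{a,b}$ of Lemma \ref{DP_basics}(2) by passing to the roots of the characteristic polynomial. Let $\alpha, \beta$ be the roots of $t^2 - Pt + Q$ in $\cc$, so that $\alpha + \beta = P$ and $\alpha\beta = Q$ lie in $\zz$. Solving the order-2 recurrence with the initial data $\lu(0) = 0$, $\lu(1) = 1$ yields the Binet formula $\lu(n) = (\alpha^n - \beta^n)/(\alpha - \beta)$ whenever $\alpha \ne \beta$; in the notation of Lemma \ref{DP_basics}(2) this says $\lu = G_{\alpha, \beta}$. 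The key observation is that the identity used to prove that lemma is a \emph{formal} factorization that never required $a, b$ to be integers, so I can reuse it verbatim for the algebraic numbers $\alpha, \beta$ and isolate exactly where integrality must be re-checked.

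Concretely, from $x^n - y^n = \prod_{d \mid n} \Phi_d(x, y)$ and $\Phi_1(x,y) = x - y$ I obtain
\[
\lu(n) \;=\; \frac{\alpha^n - \beta^n}{\alpha - \beta} \;=\; \prod_{\substack{d \mid n \\ d > 1}} \Phi_d(\alpha, \beta) \;=\; \prod_{d \mid n} g(d),
\]
where $g(1) = 1$ and $g(n) = \Phi_n(\alpha, \beta)$ for $n > 1$. Thus $\lu = \pp(g)$, and by Definition \ref{div_prod_def} it remains only to prove that each value $g(n)$ is an integer.

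I expect this integrality to be the crux of the argument. Each $\Phi_n$ is homogeneous with integer coefficients, and for $n > 1$ it is symmetric, $\Phi_n(x,y) = \Phi_n(y,x)$, as noted earlier in the paper. By the fundamental theorem of symmetric polynomials, $\Phi_n$ is an integer-coefficient polynomial in the elementary symmetric functions $x+y$ and $xy$; substituting $x = \alpha$, $y = \beta$ and using $\alpha + \beta = P \in \zz$, $\alpha\beta = Q \in \zz$ forces $g(n) = \Phi_n(\alpha, \beta) \in \zz$. This finishes the non-degenerate case. The main difficulty is purely one of clean bookkeeping in this symmetric-function reduction; an alternative route would invoke the strong divisibility property $\gcd(\lu(m), \lu(n)) = \lu(\gcd(m,n))$ together with the M\"obius formula of Lemma \ref{mobius}, but that appears to need strictly more machinery than the symmetric-function argument.

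Finally I would dispose of the repeated-root case $\alpha = \beta$, which occurs exactly when $P^2 = 4Q$. Then $P$ is even, so $\alpha = P/2 \in \zz$, and solving the recurrence gives $\lu(n) = n\,\alpha^{n-1} = G_{\alpha,\alpha}(n)$; this is a divisor-product directly by Lemma \ref{DP_basics}(2), which explicitly includes the case $a = b$ with $a \in \zz$. (Since the divisor-product framework presumes never-zero sequences, one tacitly restricts to $\lu$ with no zero term; the only excluded case $P = Q = 0$ is already ruled out by hypothesis.) Combining the two cases shows that every Lucas sequence $\lu_{P,Q}$ is a divisor-product.
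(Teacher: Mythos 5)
Your proof is correct and follows essentially the same route as the paper: the Binet formula, the factorization $\alpha^n-\beta^n=\prod_{d\mid n}\Phi_d(\alpha,\beta)$ giving $\lu=\pp(g)$ with $g(1)=1$ and $g(n)=\Phi_n(\alpha,\beta)$, and a separate elementary treatment of the repeated-root case via Lemma \ref{DP_basics}(2). The one divergence is the integrality step: the paper's main text deduces $g(n)\in\zz$ by combining M\"obius inversion (Lemma \ref{mobius}, giving $g(n)\in\qq$) with the observation that $g(n)$ is an algebraic integer, and relegates your symmetric-polynomial argument to a footnote as an alternative; your version is self-contained and has the incidental advantage of not relying on the never-zero hypothesis that Lemma \ref{mobius} requires.
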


\begin{proof} 
Factor $x^2 - Px + Q = (x - \alpha)(x -\beta)$ for $\alpha, \beta \in \cc$.  Then $\alpha, \beta$ are not both zero.  Recall the well-known formulas:  

\spb If $\alpha \ne \beta$ then:  $U(n) = \frac{\alpha^n - \beta^n}{\!\!\alpha\,-\,\beta}$ \; for every $n \in \zz^+$,\\[8pt]
\spb If $\alpha = \beta$ then:  $U(n) = n\alpha^{n-1}$ \; for every $n \in \zz^+$.\\[8pt]
To verify those formulas, note that the sequences $(\alpha^n)$ and $(\beta^n)$ satisfy that recurrence.  Then so does every linear combination $( c\alpha^n + d\beta^n )$. When $\alpha \ne \beta$, the stated formula has this form and matches $U(n)$ for $n = 0, 1$. Induction shows that those quantities are equal for every $n$.

When $\alpha = \beta$ check that $\alpha \in \zz$ and apply same method to prove that $\lu(n) = n\alpha^{n-1}$.  In this case, $\lu$ is a divisor-product since Lemma \ref{DP_basics} implies that $(n)$, $(\alpha^{n-1})$, and their product are divisor-products.  

Suppose $\alpha \ne \beta$.  Since $\alpha^n - \beta^n = \prod\limits_{d|n}\Phi_d(\alpha, \beta)$ and $\Phi_1(\alpha, \beta) = \alpha - \beta$, then $\lu = \pp(g)$ where $g(1) = 1$ and $g(n) = \Phi_n(\alpha, \beta)$ when $n > 1$.  Then $\lu$ is a divisor-product provided $g$ has integer values.  M\"{o}bius (Lemma \ref{mobius}) implies that every $g(n)$ is a rational number.  Since $\alpha, \beta$ are algebraic integers and each $\Phi_n$ has integer coefficients, we know that $g(n)$ is an algebraic integer.  Therefore each $g(n) \in \zz$.\footnote{To avoid theorems about algebraic integers, we may use the theory of symmetric polynomials: 
If $p \in \zz[x, y]$ and $p(x, y) = p(y, x)$, then $p \in \zz[\sigma_1, \sigma_2]$, where $\sigma_1 = x + y$ and $\sigma_2 = xy$  are the elementary symmetric polynomials.  Note that $\sigma_1(\alpha, \beta) = \alpha + \beta = P$ and  $\sigma_2(\alpha, \beta) = \alpha\beta = Q$. }  
\end{proof}

\medskip

For the Fibonacci sequence $F = \lu_{1, -1}$, this Lemma implies that $F = \pp(b)$ where 
$b = (1, 1, 2, 3, 5, 4, 13, 7, 17, 11, 89, 6,  \dots)$ is an integer sequence.

\bigskip

We end this section with a few more remarks about divisor-products.

The triangular number sequence $T$ is \bin\ but not a divisor-product.  ($T$ is not even divisible.)
The sequence $w := (1, c, c, c, c, \dots)$
 is a divisor-chain, so it is  \bin\ at every level by Lemma \ref{every_level_1}.  But when $c > 1$ it is not a divisor-product since $w_2w_3$ does not divide $w_1w_6$. 

\medskip
Divisibility does not imply \bin.  
Sequence $h(n) := 
\begin{cases}
    1 &\text{ if } n = 1, 5, 7, \\
    c &\text{ otherwise}
\end{cases}$  is divisible but $h$ is not \bin\ when $c > 1$.  (For $h_1h_2h_3$  does not divide $h_5h_6h_7$.) 

\bigskip
It is curious that the Euler function $\varphi(n)$ is a divisor-product. 
To see this, recall that $\varphi$ is \emph{multiplicative:} \ $\varphi(ab) = \varphi(a)\varphi(b)$ whenever $a, b$ are coprime. Standard formulas imply that $\varphi$ is divisible as in Definition \ref{divi}.  
Then the next result applies to $\varphi$.
\medskip

\begin{prop}
A 
multiplicative divisibility sequence is a divisor-product.
\end{prop}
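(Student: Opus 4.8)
The plan is to produce, via Möbius inversion, the unique rational sequence $g$ with $f = \pp(g)$ and then verify that every $g(n)$ is an integer. By Lemma \ref{mobius} we have $g(n) = \prod_{d \mid n} f(d)^{\mu(n/d)}$, and by Definition \ref{div_prod_def} it suffices to show $g(n) \in \zz$ for all $n$. Since $f$ is multiplicative and never zero, $f(1) = f(1)^2$ forces $f(1) = 1$. I would then organize the verification according to the number of distinct prime factors of $n$.

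The two easy cases come first. For $n = 1$ we have $g(1) = f(1) = 1$. For a prime power $n = p^k$, only the divisors $d = p^k$ and $d = p^{k-1}$ contribute, since $\mu(p^j) = 0$ once $j \ge 2$; hence $g(p^k) = f(p^k)/f(p^{k-1})$. This is an integer precisely because $f$ is a divisibility sequence (Definition \ref{divi}): from $p^{k-1} \mid p^k$ we get $f(p^{k-1}) \mid f(p^k)$. Note that the divisibility hypothesis is used \emph{only} along the chain of prime powers.

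The heart of the argument is the remaining case, where $n$ has at least two distinct prime factors. Here I would write $n = ab$ with $a, b > 1$ and $\gcd(a,b) = 1$, and expand $g(ab)$ by grouping each divisor of $ab$ uniquely as $d = d_1 d_2$ with $d_1 \mid a$ and $d_2 \mid b$. Using multiplicativity of $f$ (so $f(d_1 d_2) = f(d_1)f(d_2)$) and of $\mu$ (so $\mu(ab/d) = \mu(a/d_1)\mu(b/d_2)$), the exponent attached to each factor $f(d_1)$ collects to $\mu(a/d_1)\sum_{d_2 \mid b}\mu(b/d_2)$, and the inner sum $\sum_{d \mid b}\mu(d)$ vanishes because $b > 1$; the same holds symmetrically for the $f(d_2)$ factors. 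Therefore $g(ab) = 1$, which is certainly an integer, and every such $n$ can be split this way.

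The main obstacle is exactly this last cancellation: one must recognize that multiplicativity forces $g$ to be trivial off the prime powers, so that integrality of the whole sequence reduces to the single prime-power computation. (Equivalently, one could bypass Lemma \ref{mobius} and simply exhibit $g$ supported on prime powers, with $g(p^k) = f(p^k)/f(p^{k-1})$ and $g = 1$ elsewhere, then check $\pp(g) = f$ directly from multiplicativity; uniqueness of the inverse guarantees this is the same $g$.) With all three cases established, every $g(n) \in \zz$, so $f = \pp(g)$ is a divisor-product.
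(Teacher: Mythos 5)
Your proposal is correct and follows essentially the same route as the paper: both identify the Möbius inverse $g$ as being $f(p^m)/f(p^{m-1})$ at prime powers $p^m>1$ and $1$ elsewhere, with divisibility of $f$ used only to make the prime-power values integral. The paper simply asserts this formula for $g$, whereas you supply the verification (the Möbius product computation and the cancellation $\sum_{d\mid b}\mu(d)=0$ for $b>1$) that the paper leaves to the reader.
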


\begin{proof}
If $f$ is a never-zero sequence, let $f = \pp(g)$ for a sequence $g$ in $\qq$.  If $f$ is
multiplicative, then \ 
\; $g(n) =
 \begin{cases} 
   \frac{ f(p^m) }{ f(p^{m-1}) }  &\text{ if } n = p^m > 1 \text{ is a prime power}, \\
   \quad 1 \rule{0mm}{5mm}                    &\text{ \ otherwise}.
\end{cases}$  

If $f$ is also divisible, every $g(n)$ is an integer and $f$ is a divisor-product. 
\end{proof}

\vfill
\pagebreak
\begin{aside}

 Suppose $f = \pp(g)$ is a divisor-product with $f(1) = 1$.  Then: 
\begin{enumerate}[label=(\arabic*),topsep = 0pt]  
\item $f$ is multiplicative \;if and only if\; $g(n) = 1$ whenever $n$ is not a prime power.

\item $f$ is homomorphic  \;if and only if\; $g(n) = 1$ whenever $n$ is not a prime power and $g(p^m) = g(p)$ for every prime power $p^m > 1$.

\item $f$ is a GCD sequence (see Definition \ref{gcd_def} below) \;if and only if\; 
$g(m)$ and $g(n)$ are coprime whenever $m \nmid n$ and $n \nmid m$. 
\end{enumerate}
\end{aside}

Property (3) was pointed out in \cite{DB08}.
\vfill

%%%%%%%%%%%%%%%%%%%%%%%%%%%%%%%%%%%%%%%%
\newpage

\section{ Related Topics.}

\subsection{GCD Sequences.}

Let's first introduce notations motivated by lattice theory: 

\spc  $a \wedge b := \gcd(a, b)$ \; and \; $a \vee b := \lcm(a, b)$.

Those operations are defined on $\zz$, except that $0 \wedge 0$ is undefined.

\medskip
\begin{dfn} \label{gcd_def}
An integer sequence $f$ is a \textit{\gc\ sequence} if:

\spb  $f(m \wedge n) = f(m) \wedge f(n)$ \; for every $m, n \in \zz^+$

Using earlier notation, this says that for every $m, n$ with  $d = \gcd(m,n)$:\\
\spd   $\gcd(f_m, f_n ) = f_d$.
\end{dfn}

 Other authors use different names for sequences with this property. Hall \cite{mH36} and Kimberling \cite{cK79} calls them \emph{strong divisibility} sequences. Granville \cite{aG22} considers sequences that satisfy a linear recurrence, and refers to those with this GCD property  as \emph{strong LDS's} (linear division sequences).   Knuth and Wilf \cite{KW89} use the term  \emph{regularly divisible}.  Dziemia\'n{}czuk and Bajguz \cite{DB08} call them \emph{GCD-morphic} sequences.

Carmichael \cite{rC13} proved that every Lucas sequence (as in Lemma \ref{lucas}) is GCD. In 1936 Ward \cite{mW36} used prime factorizations to prove that every GCD sequence is \bin, a result also proved in \cite{KW89}.   The following Theorem together with Theorem  \ref{DP_Bin} implies: Every GCD sequence is \bin\ at every level.

\begin{thm} \label{GCD-divprod} Every \gc\ sequence is a divisor-product.
\end{thm}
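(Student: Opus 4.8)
The plan is to use M\"{o}bius inversion to turn the divisor-product question into a question about $p$-adic valuations, and then to exploit the lattice structure that the GCD identity forces on each such valuation sequence. First I would invoke Lemma \ref{mobius}: there is a unique never-zero rational sequence $g$ with $f = \pp(g)$, namely $g(n) = \prod_{d\mid n} f(d)^{\mu(n/d)}$, so it suffices to prove that every $g(n)$ is an integer. Since $\zz = \bigcap_p \zz_{(p)}$, a rational number lies in $\zz$ exactly when its valuation at every prime is nonnegative; thus the goal reduces to showing $v_p(g(n)) = \sum_{d\mid n}\mu(n/d)\,v_p(f(d)) \ge 0$ for every prime $p$ and every $n$.

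Fix a prime $p$ and write $a_n := v_p(f(n)) \ge 0$. Taking $v_p$ of the defining identity $f(m \wedge n) = f(m) \wedge f(n)$ and using $v_p(\gcd) = \min$, I get the key relation $a_{m\wedge n} = \min(a_m, a_n)$. The heart of the argument is to analyze the superlevel sets $S_t := \{\, n : a_n \ge t \,\}$ for $t \ge 1$. Specializing the relation to $m \mid n$ (so $m \wedge n = m$) gives $a_m \le a_n$, whence each $S_t$ is closed upward under divisibility; and if $m, n \in S_t$ then $a_{m\wedge n} = \min(a_m, a_n) \ge t$, so each $S_t$ is closed under $\wedge$. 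A nonempty set closed under $\wedge$ and upward under divisibility has a least element $d_t$ (the gcd of its members, attained as a \emph{finite} gcd because it divides any one fixed member), and therefore $S_t = \{\, n : d_t \mid n \,\}$ is precisely the set of multiples of $d_t$. I then plan to write $a_n = \sum_{t \ge 1} \mathbf{1}[a_n \ge t] = \sum_{t : S_t \ne \emptyset} \mathbf{1}[d_t \mid n]$, and collect thresholds according to the value of $d_t$, obtaining $a_n = \sum_{d \mid n} c_d$ with each $c_d := \#\{\, t : S_t \ne \emptyset,\ d_t = d \,\} \ge 0$ a nonnegative integer (finite since $t \le a_d$). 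So $(a_n)$ is the additive divisor-sum of the nonnegative sequence $(c_d)$, and additive M\"{o}bius inversion yields $v_p(g(n)) = \sum_{d\mid n}\mu(n/d)\,a_d = c_n \ge 0$, which is exactly what is needed.

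I expect the main obstacle to be the structural step establishing that each $S_t$ is exactly a set of multiples of a single integer $d_t$: this is the only place where the full force of the GCD hypothesis (the $\min$ equality, rather than the mere divisibility $m\mid n \Rightarrow a_m \le a_n$) is used, and some care is needed to confirm that the infimum $d_t$ is actually attained and lies in $S_t$. Everything on either side of that step --- the reduction to valuations, and the passage back from the $c_d$ to $g$ --- is routine bookkeeping with M\"{o}bius inversion, carried out one prime at a time. Note that no normalization such as $f(1)=1$ is required, since the argument treats $a_1 = v_p(f(1))$ on the same footing as every other term.
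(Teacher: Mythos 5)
Your proposal is correct, and it supplies a complete argument where the paper supplies none: the paper defers the proof to Kimberling \cite{cK79} and to Dziemia\'n{}czuk--Bajguz \cite{DB08}, adding only the hint that ``a separate argument can be given by first reducing to the case of sequences of type $f(n) = c^{a(n)}$.'' Your proof is exactly an execution of that hinted reduction, carried out prime by prime (with $a_n = v_p(f(n))$ playing the role of the exponent sequence $a(n)$), combined with Kimberling's framing that one must show M\"{o}bius inversion produces integers. The two substantive steps both hold up: (i) the \gc\ hypothesis gives $a_{m \wedge n} = \min(a_m, a_n)$, so each nonempty superlevel set $S_t = \{\, n : a_n \ge t \,\}$ is closed under $\wedge$ and upward closed under divisibility, and its gcd $d_t$ is attained as a finite gcd (any strictly descending chain of divisors of a fixed member of $S_t$ is finite), whence $S_t$ is exactly the set of multiples of $d_t$; (ii) the layer-cake count $a_n = \sum_{t \ge 1} \mathbf{1}[a_n \ge t] = \sum_{d \mid n} c_d$, with $c_d = \#\{\, t : d_t = d \,\} \le a_d$ finite and nonnegative, exhibits $(a_n)$ as an additive divisor-sum, so additive M\"{o}bius inversion gives $v_p(g(n)) = c_n \ge 0$ and hence $g(n) \in \zz$. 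Beyond merely verifying integrality, your decomposition explains it: for each prime, the valuation sequence of a \gc\ sequence is a nonnegative integer combination of the indicators $\mathbf{1}[d \mid n]$, which is precisely the additive shadow of being a divisor-product. One small remark: taking $m = n$ in the \gc\ identity forces $f(n) = \gcd(f(n), f(n)) = |f(n)| > 0$, so positivity is automatic and, as you note, no normalization $f(1) = 1$ (nor any discussion of signs) is needed.
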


This result has been  proved independently by several authors so we do not include a proof here. Kimberling \cite{cK79} showed that M\"{o}bius Inversion (see Lemma \ref{mobius}) applied to a GCD sequence produces integers.  Dziemia\'n{}czuk and Bajguz \cite{DB08} gave a quite different proof. A separate argument can be given by first reducing to the case of sequences of type $f(n) = c^{a(n)}$.

Note.  If $f, g$ are GCD sequences, then so are $f \wedge g$ and $f \circ g$.  For instance, $2^{F_n} - 1$ is a GCD sequence, so it is \bin\ at every level. This example was mentioned in \cite{AK74}.

%%%%%%%%%%%%%%%%%%%%%%%%%%%%%%%%%%%%%%%%%%%%%
%%%%%%%%%%%%%%%%%%%%%%%%%%%%%%%%%%%%%%%%%%%%%
\subsection{ComboSum Sequences}

An inductive proof that all binomial coefficients $\binom{n}{k}$ are integers uses the formula

\spd\spz  $ \binom{\,n+1\,}{\rule{0ex}{2ex}k} \;=\; \binom{\,n\,}{\rule{0ex}{2ex}k} + \binom{\,n\,}{\rule{0ex}{2ex}k-1}. $

We extend that recurrence formula to our context. Other authors have pointed out some of these ideas.  See \cite{vH67}, \cite{LS10}, \cite{mD14}.

\begin{lem} \label{bin_recurse}
Suppose $f$ is a sequence, $1 \le k \le n$, and $\la n \ra_f \ne 0$. 

 If $f_{n+1} = uf_{n-k+1} + vf_k$ for some  $u, v$, then: \smallskip

\spd\spz $\fbin{\rule{0ex}{0.8ex}n+1}{\rule{0ex}{2ex}k}_f =\; u\!\cdot\!\fbin{\rule{0ex}{0.8ex}n}{\rule{0ex}{2ex}k}_f +\ v\!\cdot\!\fbin{\rule{0ex}{0.8ex}n}{\rule{0ex}{2ex}k-1}_f$. 
\end{lem}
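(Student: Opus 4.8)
The plan is to verify the identity by clearing denominators and reducing it to the linear relation $f_{n+1} = uf_{n-k+1} + vf_k$ supplied by the hypothesis. Every term in the claimed formula is a single fraction whose denominator is an $f$-factorial, so first I would put all three \bin\ coefficients over the common denominator $\la k \ra_f = f_k f_{k-1} \cdots f_1$. This is legitimate because $\la n \ra_f \ne 0$ together with $k \le n$ guarantees that $f_1, \dots, f_k$ are all nonzero, so the factorials in the denominators do not vanish and every fraction is defined.

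The key observation is that the three numerators share a large common factor. Writing $P = f_n f_{n-1}\cdots f_{n-k+2}$ for the product of the $k-1$ consecutive terms from $f_n$ down to $f_{n-k+2}$, I would record
\[
\fbin{n+1}{k}_f = \frac{f_{n+1}\,P}{\la k \ra_f}, \qquad \fbin{n}{k}_f = \frac{P\,f_{n-k+1}}{\la k \ra_f}, \qquad \fbin{n}{k-1}_f = \frac{P\,f_k}{\la k \ra_f},
\]
where the last equality uses $\la k \ra_f = f_k \cdot \la k-1 \ra_f$ to convert the denominator $\la k-1 \ra_f$ of $\fbin{n}{k-1}_f$ into the common $\la k \ra_f$. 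Then the right-hand side $u\,\fbin{n}{k}_f + v\,\fbin{n}{k-1}_f$ collapses to $\dfrac{P\,(uf_{n-k+1} + vf_k)}{\la k \ra_f}$, and substituting the hypothesis $uf_{n-k+1} + vf_k = f_{n+1}$ yields exactly $\dfrac{f_{n+1}\,P}{\la k \ra_f} = \fbin{n+1}{k}_f$, which is the claim.

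There is no genuine obstacle beyond careful index bookkeeping: the one thing to check is that the numerator ranges line up so that $P$ really is the shared factor. For $\fbin{n+1}{k}_f$ the numerator runs over the $k$ terms $f_{n+1}, \dots, f_{n-k+2}$, for $\fbin{n}{k}_f$ over $f_n, \dots, f_{n-k+1}$, and for $\fbin{n}{k-1}_f$ over the $k-1$ terms $f_n, \dots, f_{n-k+2}$; factoring out $P$ leaves the single ``new'' factor $f_{n+1}$, $f_{n-k+1}$, and $f_k$ respectively, which is precisely the combination the hypothesis controls. It is worth noting that $f_{n+1}$ itself is never required to be nonzero, since it occurs only in numerators; the condition $\la n \ra_f \ne 0$ is used solely to keep the denominators alive and the cancellation valid.
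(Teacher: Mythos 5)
Your proposal is correct and is essentially the paper's own argument: both proofs amount to multiplying the hypothesis $f_{n+1} = uf_{n-k+1} + vf_k$ by the same factor $\dfrac{f_nf_{n-1}\cdots f_{n-k+2}}{\la k \ra_f} \;=\; \dfrac{\la n \ra_f}{\la k \ra_f\, \la n-k+1 \ra_f}$ and then recognizing the three resulting terms as the binomid coefficients. The only difference is presentational --- the paper rearranges the hypothesis as $\frac{f_{n+1}}{f_kf_{n-k+1}} = u\cdot\frac{1}{f_k} + v\cdot\frac{1}{f_{n-k+1}}$ and multiplies by $\la n \ra_f/(\la k-1 \ra_f \la n-k \ra_f)$, working in $f$-factorial form, whereas you put all three coefficients over the common denominator $\la k \ra_f$ and factor the common product $P$ out of the numerators.
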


\textit{Proof.} Given  $\frac{f_{n+1}}{f_kf_{n-k+1} } = u\!\cdot\!\frac{1}{f_k} + v\!\cdot\!\frac{1}{f_{n-k+1} }$, multiply by $\frac{\la n \ra_f }{\la k-1 \ra_f \la n-k \ra_f }$ to find \smallskip

\spb $\frac{\la n+1 \ra_f }{\la k \ra_f \la n-k+1 \ra_f } \;=\; u\!\cdot\!\frac{\la n \ra_f }{\la k \ra_f \la n-k \ra_f } \;+\; v\!\cdot\!\frac{\la n\ra_f }{\la k-1 \ra_f \la n-k+1 \ra_f }$.  

This is the stated conclusion. \qed

\smallskip

\begin{dfn} \label{divGCD} An integer sequence $f$ is a \emph{\dgcd} sequence if for every $m, n$: 

\spc  $f_m \wedge f_n  \mid f_{m+n}$.

\end{dfn}

By elementary number theory, that condition is equivalent to saying:\\
\spc $f_{m+n} = u\sdot f_m + v\sdot f_n$ \ for some $u, v \in \zz$. 

Certainly any GCD sequence is a \dgcd.  In particular, the Lucas sequences $\lu_{P,Q}$ have the \dgcd\ property.  

\medskip
\begin{prop}\label{gcd-criterion}
 Every \dgcd\ sequence is \bin.  
\end{prop}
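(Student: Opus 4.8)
The plan is to prove this by induction on $n$, using the recurrence in Lemma \ref{bin_recurse} in exactly the way the Pascal recurrence is used in the classical proof that $\binom{n}{k} \in \zz$. The single new observation needed is that the defining property of a \dgcd\ sequence supplies precisely the linear relation that Lemma \ref{bin_recurse} requires.

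Here is the key matching step. By Definition \ref{divGCD}, in its equivalent additive form, for any indices $a, b \in \zz^+$ there exist $u, v \in \zz$ with $f_{a+b} = u \sdot f_a + v \sdot f_b$. Given $n \ge 1$ and an index $k$ with $1 \le k \le n$, I would apply this with $a = n-k+1$ and $b = k$; both are positive since $1 \le k \le n$, and $a + b = n+1$. This yields $u, v \in \zz$ with $f_{n+1} = u f_{n-k+1} + v f_k$, which is exactly the hypothesis of Lemma \ref{bin_recurse}. Since every $f_i$ is nonzero we have $\la n \ra_f \ne 0$, so the Lemma gives
$$\fbin{n+1}{k}_f = u \sdot \fbin{n}{k}_f + v \sdot \fbin{n}{k-1}_f.$$

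Now I would run the induction on $n$. For the base, $\fbin{n}{0}_f = \fbin{n}{n}_f = 1$ handles $n = 0, 1$ and, more generally, the two boundary entries of every row. For the inductive step, assume all entries $\fbin{n}{k}_f$ with $0 \le k \le n$ are integers. The boundary entries $\fbin{n+1}{0}_f$ and $\fbin{n+1}{n+1}_f$ equal $1$, and for each interior $k$ with $1 \le k \le n$ the displayed identity expresses $\fbin{n+1}{k}_f$ as an integer combination of two entries from row $n$, both integers by the inductive hypothesis. Hence every entry of row $n+1$ is an integer, and by Definition \ref{bin_def} the sequence $f$ is \bin.

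I do not expect a genuine obstacle: once the \dgcd\ condition is read as ``$f_{n+1}$ is a $\zz$-linear combination of $f_{n-k+1}$ and $f_k$,'' the argument is the standard additive induction. The only points to keep straight are that the chosen indices $n-k+1$ and $k$ are positive and sum to $n+1$ (so the relation is a legitimate \dgcd\ instance), and that the nonvanishing of the $f_i$ guarantees the factorials in Lemma \ref{bin_recurse} are nonzero. Note that the coefficients $u, v$ may depend on both $n$ and $k$, but this causes no difficulty since the recurrence is applied one entry at a time.
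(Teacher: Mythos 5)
Your proof is correct and takes essentially the same route as the paper's: interpret the \dgcd\ condition (via its Bezout-type equivalent) as supplying the relation $f_{n+1} = u f_{n-k+1} + v f_k$, feed that into Lemma \ref{bin_recurse}, and induct row by row. The paper compresses the induction you spell out into a single sentence, so your write-up is simply a more detailed version of the same argument.
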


\begin{proof}  If $f$ is \dgcd\ and $0 < k < n$,  then
 \ $f_{n+1} = uf_{n-k+1} + vf_k$,  for some integers $u, v$,
and Lemma \ref{bin_recurse} applies.  An inductive proof shows that  $f$ is \bin.    
\end{proof}

\bigskip
Every \dgcd\ sequence is divisible.  For when $m = n$, Definition \ref{divGCD} implies: $f(n) \mid f(2n)$. An inductive argument shows: $f(n) \mid f(kn)$ for every $k \in \zz^+$.

\medskip

Sequence $(1, 2, 2, 2, \dots)$ is a \dgcd\ sequence that is not a divisor-product.  

I don't know whether every \dgcd\ sequence is \bin\ at every level.

%%%%%%%%%%%%%%%%%%%%%%%%%%%%%%%%%%%%%%%%
%%%%%%%%%%%%%%%%%%%%%%%%%%%%%%%%%%%%%%%%

\subsection{Polynomial Sequences}

Which polynomials satisfy our various conditions on integer sequences? 

A polynomial $f \in \cc[x]$ is called \textit{integer-valued} if $f(n) \in \zz$ for every $n = 1, 2, 3, \dots$  %That is 

\medskip
\begin{prop}
 Suppose $f$ is an integer-valued polynomial of degree $d$.  \\
If $f(n) \mid f(2n)$ for infinitely many $n \in \zz^+$, then $f(x) = bx^d$ for some $b \in \zz$.
\end{prop}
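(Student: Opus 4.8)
The plan is to exploit the mismatch between divisibility (a rigid integrality condition) and the asymptotic growth of $f$. Write $f(x) = a_d x^d + a_{d-1}x^{d-1} + \cdots + a_0$ with $a_d \ne 0$; since $f$ is integer-valued its coefficients lie in $\qq$. Let $S \subseteq \zz^+$ be the infinite set of $n$ with $f(n) \mid f(2n)$. For all sufficiently large $n$ we have $f(n) \ne 0$ (a nonzero polynomial has finitely many roots), so for large $n \in S$ the ratio $f(2n)/f(n)$ is a well-defined integer.

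The key step is an asymptotic one: since the leading term dominates, $f(2n)/f(n) \to 2^d$ as $n \to \infty$. An integer-valued quantity converging to the integer $2^d$ must equal $2^d$ for all sufficiently large $n$; hence $f(2n) = 2^d f(n)$ for infinitely many $n \in S$. I would then set $g(x) = f(2x) - 2^d f(x)$. The degree-$d$ coefficient of $g$ is $a_d 2^d - 2^d a_d = 0$, so $\deg g < d$, while $g$ vanishes at infinitely many integers. A nonzero polynomial has only finitely many roots, so $g \equiv 0$, i.e. $f(2x) = 2^d f(x)$ as polynomials.

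From this identity, comparing the coefficient of $x^k$ gives $a_k 2^k = 2^d a_k$, so $a_k(2^k - 2^d) = 0$ and therefore $a_k = 0$ for every $k < d$. Thus $f(x) = a_d x^d$, and evaluating at $x = 1$ shows $a_d = f(1) \in \zz$ because $f$ is integer-valued; taking $b = a_d$ finishes the proof. The only genuinely delicate point is the middle step: concluding, from the limit together with integrality, that the integer ratio is eventually \emph{exactly} $2^d$ rather than merely close to it. Everything after that is the identity theorem for polynomials and a routine comparison of coefficients.
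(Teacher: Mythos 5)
Your proof is correct and complete. Note that the paper itself does not supply an argument for this Proposition (it states only that the proof is omitted), so there is no printed proof to compare yours against; your write-up stands on its own. The one delicate step you flag is handled correctly: for $n \in S$ large enough that $f(n) \ne 0$ and $|f(2n)/f(n) - 2^d| < 1$, the ratio $f(2n)/f(n)$ is an integer lying within distance less than $1$ of the integer $2^d$, hence equal to it, so $f(2n) = 2^d f(n)$ for infinitely many $n$. After that, the identity-theorem step (the polynomial $g(x) = f(2x) - 2^d f(x)$ vanishes at infinitely many integers, hence $g \equiv 0$) and the coefficient comparison $a_k(2^k - 2^d) = 0$ for $k < d$ are routine, and $b = a_d = f(1) \in \zz$ follows from integer-valuedness. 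Two minor simplifications: the remark that $\deg g < d$ is unnecessary, since a nonzero polynomial of any degree has only finitely many roots; and the rationality of the coefficients is never actually used --- the argument works verbatim for any polynomial in $\cc[x]$ that takes integer values on $\zz^+$, which is exactly the paper's definition of integer-valued.
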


The proof is omitted here. This result determines all polynomial sequences that are divisible.  It is more difficult to determine which polynomials are \bin..  

Recall that each Pascal column polynomial 

\spd $C_m(x) = \binom{x+m-1}{m}$ 

is integer-valued, degree $m$, and $C_m(1) = 1$.  By Theorem \ref{DP_Bin}, $C_m$ is \bin.  

\medskip
For more examples, note that the polynomial

\spd $H_m(x) = \binom{mx}{m}$ 

is integer-valued, degree $m$, and $H_m(1) = 1$. 
Check that $\la n \ra_{H_m} = \frac{ (mn)! }{\ (m!)^n }$, so that: $\fbin{n}{k}_{H_m}  \;=\;   \binom{mn}{\rule{0mm}{3mm}mk}$. Since binomial coefficients are integers, $H_m$ is \bin.
\medskip

\begin{thm} Let $f$ be a \bin\ polynomial sequence with $f(1) = 1$.  If $\deg(f) \le 2$  then $f$ is one of:
 \spz $1, \spz x, \spz \binom{x+1}{2}, \spz x^2, \spz  \binom{2x}{2}.$ 
\end{thm}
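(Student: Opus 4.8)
The plan is to parametrize all integer-valued polynomials of degree $\le 2$ with $f(1) = 1$, and then test the \bin\ condition against them. Any integer-valued polynomial of degree $\le 2$ can be written in the basis of binomial coefficients as $f(x) = A\binom{x}{2} + B\binom{x}{1} + C$ with $A, B, C \in \zz$. The normalization $f(1) = 1$ forces $B + C = 1$, so $C = 1 - B$, leaving a two-parameter family $f(x) = A\binom{x}{2} + Bx + (1 - B)$ indexed by $(A, B) \in \zz^2$. The five claimed answers $1,\; x,\; \binom{x+1}{2},\; x^2,\; \binom{2x}{2}$ correspond to specific $(A, B)$: degree $0$ or $1$ gives $A = 0$; the three quadratics are $\binom{x+1}{2} = \tfrac12 x^2 + \tfrac12 x$, $x^2$, and $\binom{2x}{2} = 2x^2 - x$. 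The task is to show these are the only members of the family for which every $\fbin{n}{k}_f$ is an integer.

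\textbf{Reducing the degree-2 case to a divisibility test.}
First I would dispose of the low-degree cases: when $\deg f \le 1$ the polynomial is $1$, $x$, or $f(x) = Bx + (1-B)$ with $B \ne 0, 1$, and I would check directly (e.g. via the condition in Definition~\ref{bin_def} that $f_1 \cdots f_k$ divide $f_{m+1}\cdots f_{m+k}$) that only $1$ and $x$ survive. For the genuinely quadratic case ($A \ne 0$), the heart of the argument is to extract strong arithmetic constraints from the requirement that $\fbin{n}{2}_f = \dfrac{f_n f_{n-1}}{f_2 f_1} = \dfrac{f_n f_{n-1}}{f_2}$ be an integer for all $n$ (using $f_1 = f(1) = 1$). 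This is the simplest nontrivial \bin\ condition and already says $f_2 \mid f_n f_{n-1}$ for every $n$. I would evaluate $f$ at small arguments, writing $f_2 = f(2)$, $f_3 = f(3)$, etc.\ as explicit quadratics in $(A, B)$, and extract a finite list of integer divisibility relations; the goal is to pin down $(A, B)$ to finitely many candidates.

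\textbf{The main obstacle and how to close it.}
The hard part will be turning "$f_2$ divides $f_n f_{n-1}$ for all $n$" into a finite, checkable condition, since a priori this is an infinite family of constraints. The key observation I would exploit is that $f(n) \bmod f_2$ is eventually periodic (indeed periodic with period $f_2$, since $f$ is a polynomial with integer values and $n \equiv n' \pmod{f_2}$ implies $f(n) \equiv f(n') \pmod{f_2}$ for integer-valued $f$ once one clears denominators carefully). Hence the infinitely many divisibility conditions collapse to a single condition on one period, reducing the problem to a finite search. From the resulting constraints I expect $f_2 \in \{1, 2, 3, 4\}$ or some similarly small set, and then a direct finite check of each surviving $(A,B)$ — verifying the full \bin\ property, not merely $\fbin{n}{2}_f$ — will confirm exactly the five polynomials. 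Finally I would note that $\binom{x+1}{2}$ is $C_2$ and $\binom{2x}{2}$ is $H_2$, both already shown \bin\ above, and that $x^2 = x \cdot x$ is \bin\ by Lemma~\ref{bin-basic}\ref{prods}, so the five candidates genuinely are \bin; the classification asserts there are no others.
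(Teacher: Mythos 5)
First, a point of comparison: the paper does not actually prove this theorem — it states that ``the proof involves many details and will appear in a separate paper'' — so your attempt must stand on its own, and on its own terms it has a genuine gap. The gap is in the reduction to a finite search. You extract constraints from the level-$2$ condition that $f_2 \mid f_n f_{n-1}$ for all $n$ and claim this pins $(A,B)$ down to finitely many candidates. It cannot: the modulus $f_2 = f(2) = A + B + 1$ itself depends on $(A,B)$, and on the entire lines $A + B + 1 = \pm 1$ the condition is vacuous, so infinitely many candidates survive. Concretely, take $(A,B) = (2,-2)$, i.e.\ $f(x) = x^2 - 3x + 3$, which is not one of the five polynomials. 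Here $f_1 = f_2 = 1$, so every $\fbin{n}{2}_f$ is an integer; moreover $f_3 = 3$ and $f(n) \equiv n^2 \pmod 3$, so every $\fbin{n}{3}_f$ is an integer as well. This $f$ is excluded only at level $4$: one computes $f_4 = 7$ and $f(n) \equiv 0 \pmod 7$ exactly when $n \equiv 4, 6 \pmod 7$, so the four consecutive terms $f_7, f_8, f_9, f_{10} = 31, 43, 57, 73$ contain no multiple of $7$, and $\fbin{10}{4}_f = \frac{73 \cdot 57 \cdot 43 \cdot 31}{7 \cdot 3}$ is not an integer. Thus eliminating the spurious candidates requires divisibility conditions at unboundedly many levels $k$, where the modulus $f_1 f_2 \cdots f_k$ changes and grows with $k$; no single finite check over one period modulo $f_2$ can do the job. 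How to control all levels simultaneously is exactly the missing idea, and presumably constitutes the ``many details'' the author defers to a separate paper.

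Two smaller points. Your periodicity claim is also incorrect as stated: an integer-valued polynomial need not have integer coefficients, and for degree $2$ one only gets that $f(n) \bmod m$ is periodic with period $2m$, not $m$ — for example $\binom{n}{2} \bmod 2$ has period $4$. That is fixable, but it matters if one is counting the cases in a finite search. What is correct in your proposal is the easy inclusion, namely that the five listed polynomials really are binomid: $1$ and $x$ trivially, $\binom{x+1}{2} = C_2$ by Theorem \ref{DP_Bin}, $\binom{2x}{2} = H_2$ via $\fbin{n}{k}_{H_2} = \binom{2n}{2k}$, and $x^2$ by Lemma \ref{bin-basic}(1). But the content of the theorem is exclusivity, and there your argument does not close.
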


\smallskip

The proof involves many details and will appear in a separate paper. Listing all the degree 3 \bin\ polynomials seems to be much more difficult.

%%%%%%%%%%%%%%%%%%%%%%%%%%%
\subsection{Linear Recurrences}

Suppose   $f$ is an integer sequence satisfying a linear recurrence of order 2:

\spc $f(n+2) = P\sdot f(n+1) - Q\sdot f(n)$  \; for every $n \ge 1$, \hfill (d)

where $P, Q \in \zz$. Suppose the associated polynomial.is:

\spb $p(x) = x^2 - Px + Q = (x - \alpha)(x - \beta)$, \;  for $\alpha, \beta \in \cc$. 

If $f(0) = 0$ then $f$ is a constant multiple of the Lucas sequence $\lu_{P,Q}$ of Lemma \ref{lucas}, and $f$  enjoys most of the properties mentioned above:  it is a divisibility sequence, a GCD sequence, a divisor-product, a \dgcd, and is \bin\ at every level.  

\spb If $f(0) \ne 0$, can $f$ still satisfy some of those properties?

If $Q = 0$ then $f$ satisfies a recurrence of order 1:  $f(n+1) = Pf(n)$ for $n \ge 2$.  
Then $f(n)$ has the form $a\sdot P^{n-1}$ (for $n \ge 2$) and it's not hard to determine which of those properties $f$ satisfies.  We assume below that $Q \ne 0$.

For sequences $f$ satisfying a linear recurrence (of any order), Kimberling \cite{cK78} proved that if $f$ is a GCD sequence with $f(0) \ne 0$, then $f$ must be periodic.  For the order 2 case, all the periodic GCD sequences are listed in \cite{HS85}.  The next result involves a much weaker hypothesis.
\medskip

\begin{prop} \label{degenerate}
Suppose $f$ satisfies linear recurrence (d) above, and $Q\sdot f(0) \ne 0$.\linebreak
  If $f$ is divisible, then $\alpha/\beta$ is a root of unity.
\end{prop}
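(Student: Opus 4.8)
The plan is to distill the divisibility hypothesis into one explicit divisibility among the integers $f(n)$, and then finish with a classical finiteness theorem for binary recurrences. Two preliminary reductions come first. If $\alpha=\beta$ then $\alpha/\beta=1$ is a root of unity and there is nothing to prove, so I assume $\alpha\neq\beta$ and write $f(n)=A\alpha^{n}+B\beta^{n}$. I also read the hypothesis that $f$ has ``order $2$'' as meaning its minimal order is $2$, i.e. $A\neq 0\neq B$. This reading is essential: without it $f$ may be geometric and the conclusion fails, as with $f(n)=2^{n}$, which satisfies recurrence (d) for $P=5,\,Q=6$ (so $\alpha=3,\ \beta=2$), is divisible, and has $Q\sdot f(0)=6\neq 0$, yet $\alpha/\beta=3/2$ is not a root of unity.

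First I would bring in the companion sequence $V(n)=\alpha^{n}+\beta^{n}$, which obeys the same recurrence with integer seeds $V(0)=2,\ V(1)=P$, so $V(n)\in\zz$ for all $n$. Expanding $V(n)f(n)$ and using $\alpha\beta=Q$ together with $A+B=f(0)$ gives the identity
\[
  f(2n)\;=\;V(n)\,f(n)\;-\;Q^{n}f(0),\qquad n\ge 1 .
\]
Since $f$ is divisible and $n\mid 2n$, we have $f(n)\mid f(2n)$; as $f(n)$ trivially divides $V(n)f(n)$, the identity forces
\[
  f(n)\;\bigm|\;Q^{n}f(0)\qquad\text{for every }n\ge 1 ,
\]
a relation I will call $(\star)$. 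Because $Q\sdot f(0)\neq 0$, relation $(\star)$ also shows that no term vanishes: $f(n)\neq 0$ for all $n$.

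The force of $(\star)$ is arithmetic rather than metric. Every prime dividing $f(n)$ must divide $Q^{n}f(0)$, hence must lie in the fixed finite set $S$ of primes dividing $Q\sdot f(0)$; thus each $f(n)$ is, up to sign, built only from primes in $S$. I would then argue by contradiction: if $\alpha/\beta$ were not a root of unity, then $f$ would be a fully non-degenerate binary recurrence (distinct roots, nonzero coefficients, root-ratio not a root of unity), and by the classical theorem of P\'olya the greatest prime factor of $f(n)$ tends to infinity with $n$. That is impossible when all prime factors are confined to the finite set $S$. Hence $\alpha/\beta$ is a root of unity.

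The algebraic steps --- the $f(2n)$ identity, the extraction of $(\star)$, and the reduction to a finite set of primes --- are short and routine. The weight of the proof lies entirely in the final step, and I expect that to be the main obstacle: one must know that a non-degenerate binary recurrence cannot have all of its terms supported on finitely many primes. Size estimates will not help, since $(\star)$ itself keeps $|f(n)|\le|Q|^{n}|f(0)|$, and in the equal-modulus case (e.g. complex-conjugate roots with $|\alpha|=|\beta|$) there is no magnitude obstruction whatsoever. So this step genuinely needs the prime-factor growth theorem of P\'olya (or, for a self-contained treatment, the finiteness of solutions to an associated $S$-unit equation), which is the deep input of the argument.
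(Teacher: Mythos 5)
Your argument is correct under your reading of the hypothesis, and it is essentially the same proof as the paper's. The paper's (cited, not written-out) proof has exactly your two-step skeleton: it quotes Hall for the fact that every prime factor of any $f(n)$ divides $Q\sdot f(0)$ --- indeed it records the same relation $f(n) \mid Q^n \sdot f(0)$ that you call $(\star)$ --- and then quotes Ward's 1954 theorem that a non-degenerate second-order recurrence has infinitely many prime divisors. You re-derive Hall's observation from scratch via the companion-sequence identity $f(2n) = V(n)f(n) - Q^{n}f(0)$ (a nice self-contained touch), and you cite P\'olya's prime-factor growth theorem where the paper cites Ward; these are the same deep input, so the two proofs differ only in attribution and in how much is left to the references.

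One part of your proposal is more than a stylistic difference and deserves emphasis: your example $f(n)=2^{n}$, which satisfies (d) with $P=5$, $Q=6$, is divisible, has $Q\sdot f(0)=6\neq 0$, and has root ratio $3/2$, is a genuine counterexample to the Proposition as literally stated. So your insistence that (d) be the \emph{minimal} recurrence of $f$ (equivalently $A\neq 0\neq B$ in $f(n)=A\alpha^{n}+B\beta^{n}$) is not an optional reading but a necessary repair. The same caveat applies to the paper's quoted form of Ward's theorem, where ``non-degenerate'' is glossed only as ``$\alpha/\beta$ is not a root of unity'': that version is false for $2^{n}$, whose only prime divisor is $2$; the theorem also requires that the sequence genuinely involve both roots. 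With that hypothesis made explicit, your proof is complete and matches the paper's intended argument.
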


\textit{Proof Outline.}
If a divisibility sequence $f$ satisfies a linear recurrence, Hall \cite{mH36} noted that every prime factor of any $f(n)$ also divides $Q\sdot f(0)$. (In fact, for the order 2 case: $f(n) \mid Q^n\sdot f(0)$ for every $n$.) Then the set of all $f(n)$ involves only finitely many different prime factors. 

Ward \cite{mW54} showed: If $f$ is non-degenerate (meaning that $\alpha/\beta$ is not a root of unity), then the values $f(n)$ involve infinitely many primes.  This completes the proof.  \qed
 
 \medskip
A version of this Proposition is valid for all linearly recurrent sequences, not just those of order 2.  The proof uses \cite{rL74}, where Laxton generalized Ward's Theorem.

\medskip
Proposition \ref{degenerate} can be used to make a complete list of divisibility sequences that satisfy a recurrence of order 2.  In addition to the Lucas sequences and exponential sequences, there are a few periodic cases, with periods 1, 2, 3, 4 or 6. %Details will be presented in a separate paper.

\medskip
A. Granville \cite{aG22} has studied dvisibility sequences in much greater depth.

\bigskip

In summary, among all sequences $f$ satisfying recurrence (d), we can list all those that are \gc, or divisor-product, or \dgcd\  (since each of those properties implies divisibility).  The situation is more difficult for \bin\ sequences.

\begin{oques} \ 
\begin{enumerate}[label=(\arabic*), itemsep=3pt, topsep=0pt]

\item Which sequences $f$ satisfying recurrence (d) are \bin? If such $f$ is \bin\ (or \bin\ at every level), must $f$ be a divisibility sequence?  
\item What if we allow linearly recurrent sequences of order $> 2$\,? 
\end{enumerate}
\end{oques}

For (2), note that sequence $T(n) = \binom{n}{2}$ is \bin\  and $T$ satisfies a linear recurrence of order 3 with polynomial $p(x) = (x - 1)^3$. However $T$ is not divisible and is not \bin\ at level 2.

\bigskip\bigskip

{\bf Acknowledgments.} It is a pleasure to thank Jim Fowler, Paul Pollack, Zev Rosengarten, and the anonymous referee for their helpful comments and suggestions.

\newpage

\end{document}